\documentclass[12pt,a4paper, final]{amsart}
\usepackage{amssymb}
\usepackage{hyperref}
\usepackage[notcite, notref]{showkeys}
\usepackage[utf8]{inputenc}

\usepackage{enumitem}
\setlist[enumerate]{font=\normalfont}

\usepackage{color}

\usepackage{listings} 
\lstdefinelanguage{Sage}[]{Python}
{morekeywords={False,sage,True},sensitive=true}
\definecolor{dblackcolor}{rgb}{0.0,0.0,0.0}
\definecolor{dbluecolor}{rgb}{0.01,0.02,0.7}
\definecolor{dgreencolor}{rgb}{0.2,0.4,0.0}
\definecolor{dgraycolor}{rgb}{0.30,0.3,0.30}

\newlist{enumA}{enumerate}{1}
\setlist[enumA]{label=(\alph*),ref=\alph*, font=\normalfont}

\newlist{enumN}{enumerate}{1}
\setlist[enumN]{label=(\arabic*),ref=\arabic*, font=\normalfont}

\newlist{enumR}{enumerate}{1}
\setlist[enumR]{label=(\roman*),ref=\roman*,font=\normalfont}

\theoremstyle{plain}
\newtheorem{theorem}{Theorem}[section]
\newtheorem{proposition}[theorem]{Proposition}

\newtheorem*{claim}{Claim}

\theoremstyle{definition}

\newtheorem{example}[theorem]{Example}
\newtheorem{remark}[theorem]{Remark}

\newtheorem{notation}[theorem]{Notation}

\theoremstyle{remark}

{}

\numberwithin{equation}{section}
\newcommand{\N}{\mathbb N}
\newcommand{\Z}{\mathbb Z}

\newcommand{\R}{\mathbb R}
\newcommand{\C}{\mathbb C}

\newcommand{\fe}{\mathfrak e}
\newcommand{\ff}{\mathfrak f}
\newcommand{\fg}{\mathfrak g}
\newcommand{\fh}{\mathfrak h}

\newcommand{\fl}{\mathfrak l}

\newcommand{\fp}{\mathfrak p}

\newcommand{\ft}{\mathfrak t}

\DeclareMathOperator{\GL}{GL}

\DeclareMathOperator{\SU}{SU}
\DeclareMathOperator{\Sp}{Sp}

\DeclareMathOperator{\Spin}{Spin}

\newcommand{\gl}{\mathfrak{gl}}

\newcommand{\so}{\mathfrak{so}}
\newcommand{\spp}{\mathfrak{sp}}
\newcommand{\su}{\mathfrak{su}}

\newcommand{\op}{\operatorname}
\newcommand{\Id}{\textup{Id}}

\newcommand{\mi}{\mathrm{i}}

\newcommand{\inner}[2]{\langle {#1},{#2}\rangle }
\newcommand{\innerdots}{\langle {\cdot},{\cdot}\rangle }
\newcommand{\ee}{\varepsilon}

\DeclareMathOperator{\Sym}{Sym}

\DeclareMathOperator{\Ad}{Ad}
\DeclareMathOperator{\ad}{ad}
\DeclareMathOperator{\Cas}{Cas}

\DeclareMathOperator{\st}{st}

\DeclareMathOperator{\Spec}{Spec}

\DeclareMathOperator{\Iso}{Iso}

\newcommand{\kil}{\operatorname{B}}

\newcommand{\PP}{\mathcal{P}}

\title[Strongly isotropy irreducible spaces]{Strongly isotropy irreducible spaces whose Laplacians have all their eigenspaces irreducible}

\author{Emilio~A.~Lauret}
\address{Instituto de Matemática (INMABB), Departamento de Matemática, Universidad Nacional del Sur (UNS)-CONICET, Bahía Blanca, Argentina.}
\email{emilio.lauret@uns.edu.ar}
\author{Fiorela Rossi Bertone}
\address{Instituto de Matemática (INMABB), Departamento de Matemática, Universidad Nacional del Sur (UNS)-CONICET, Bahía Blanca, Argentina.}
\email{fiorela.rossi@uns.edu.ar}

\subjclass[2020]{58C40, 53C30, 22E46.}
\keywords{Laplace-Beltrami operator, Laplace spectrum, Laplace eigenspace, standard homogeneous manifold, isotropy irreducible space. }
\thanks{This research was supported by a grant (code PGI 24/ZL25) from SGCYT--UNS}
\date{\today}

\begin{document}

\begin{abstract}
Let $H$ be a closed subgroup of a compact connected Lie group $G$ such that $G/H$ is simply connected, the action of $G$ on $G/H$ is almost effective, and $G/H$ is a strongly isotropy irreducible space. 
The smallest positive eigenvalue of the Laplace-Beltrami operator $\Delta$ associated to the standard metric on $G/H$ was recently computed.  
In this article we obtain a second non-zero eigenvalue of $\Delta$. 

As a consequence, we prove that all eigenspaces of the Laplacian are irreducible as $G$-modules if and only if $(G,H)$ is a compact symmetric pair of real rank one, or $(G,H)$ is equal to $(\operatorname{G}_2,\operatorname{SU}(3))$ or $(\operatorname{Spin}(7),\operatorname{G}_2)$. 
\end{abstract} 

\maketitle

%\tableofcontents
	
\section{Introduction}\label{sec:intro}

A homogeneous space $M:=G/H$ ($H$ a closed subgroup of a Lie group $G$) is called \emph{isotropy irreducible} if the isotropy representation (i.e.\ $\chi:H\to \GL(T_{eH}M)\simeq\GL(\fg/\fh)$ given by $\chi(h)\cdot (X+\fh)=\Ad(h)\cdot X+\fh$) is irreducible. 
Similarly, $M$ is said to be \emph{strongly isotropy irreducible} if $\chi$ restricted to the identity connected component $H^0$ of $H$ is irreducible. 
Every strongly isotropy irreducible space is in particular isotropy irreducible, though the converse is not true. 
Of course, both notions coincide when $H$ is connected.

Strongly isotropy irreducible spaces were classified independently by 
Manturov~\cite{Manturov61a,Manturov61b,Manturov66},
Wolf~\cite{Wolf68-isot-irred, Wolf84-isot-irred-errata}
and Krämer \cite{Kramer-stronglyisotropyirredspaces}. 
They consist in 10 families and 33 isolated cases (see Tables~\ref{table:isotirred-families}--\ref{table:isotirred-isolated}). 
The remaining isotropy irreducible spaces were classified by Wang and Ziller~\cite{WangZiller91}.

These spaces have remarkable geometric properties. 
For instance, every $G$-invariant (Riemannian) metric on an isotropy irreducible space $M=G/H$ is a positive multiple of the standard one $g_{\st}$. 
Furthermore, $(G/H,g_{\st})$ is Einstein, that is, the Ricci tensor of $g_{\st}$ is proportional to $g_{\st}$. 
Every irreducible symmetric space is strongly isotropy irreducible. 
A non-compact isotropy irreducible space is symmetric (\cite[Thm.~7.46]{Besse}). 

Recent attention has been given to spectral properties of isotropy irreducible spaces:
\cite{CaoHe15}
\cite{Schwahn-Lichnerowicz},
\cite{ProvenzanoStubbe25},
\cite{LauretTolcachier-nu-stab},
\cite{LauretRossiTolcachier-Saloff-Coste}.

For an arbitrary compact connected homogeneous space $G/H$ with $G$ semisimple, the spectrum of the Laplace-Beltrami operator $\Delta$ associated to the standard metric $g_{\st}$ (i.e.\ the corresponding inner product on the orthogonal complement of $\fh$ in $\fg$ with respect to the Killing form $\kil_\fg$  is equal to $-\kil_\fg$) is given by (see e.g.\ \cite[Thm.~3.1]{LauretTolcachier-nu-stab}) the multiset 
\begin{equation}\label{eq:Spec(G/H,g_st)}
\Spec(G/H,g_{\st})
:=\Big\{\!\!\Big\{ 
	%\underbrace{
\lambda^\pi,\dots,\lambda^\pi
	%}_{\dim V_\pi\times \dim V_\pi^H\text{-times} } 
	: \pi\in\widehat G
\Big\}\!\!\Big\}
\qquad \big(\lambda^\pi\text{ repeated $(\dim V_\pi\times \dim V_\pi^H)\text{-times}$}\big)
.
\end{equation}
%with $\lambda^\pi$ reapeted $\dim V_\pi\times \dim V_\pi^H$-times. 
Here, $\widehat G$ denotes a complete representative set of isomorphism classes of irreducible complex representations $\pi:G\to\GL(V_\pi)$ of $G$ ($V_\pi$ is a complex vector space), 
$\lambda^\pi$ is determined by $d\pi(-\Cas_\fg)=\lambda^\pi \,\Id_{V_\pi}$, and $\Cas_\fg$ is the Casimir element of $\fg$ (i.e.\ $\Cas_\fg=\sum_j X_j^2\in\mathcal U(\fg_\C)$, where $\{X\}_j$ is any orthonormal basis of $\fg$ with respect to $-\kil_\fg$). 
The scalar $\lambda^\pi$ can be easily computed in terms of the highest weight $\Lambda_\pi$ of $\pi$, once a maximal torus $T$ of $G$ and a positive root system of $(\fg_\C,\ft_\C)$ have been fixed, which is assumed from now on. 

Note that $\pi\in\widehat G$ contributes with an eigenvalue in $\Spec(G/H,g_{\st})$ only if $\pi$ lies in 
\begin{equation} \label{eq:hatG_H}
\widehat G_H:=\{\pi\in\widehat G: V_\pi^H\neq0\},
\end{equation}
the so called \emph{spherical representations} of $(G,H)$. 
Actually, the main difficulty in describing  $\Spec(G/H,g_{\st})$ is to characterize $\widehat G_H$. 
It is well known that 
\begin{equation}\label{eq:P^+(G,H)}
\PP^+(G,H):=\{\Lambda\in \PP^+(G): \pi_{\Lambda}\in \widehat G_H\}
\end{equation}is a monoid. 
More precisely, if $\Lambda$ and $\Lambda'$ are two dominant $G$-integral weights of $G$ satisfying $\pi_{\Lambda},\pi_{\Lambda'}\in\widehat G_H$, then $\pi_{\Lambda+\Lambda'}\in\widehat G_H$.

Although $\widehat G_H$, or equivalently $\PP^+(G,H)$, is explicitely described for compact symmetric pairs and many other cases (e.g.\ for Gelfand pairs by Krämer~\cite{Kramer-sphericalpairs}), it is far from being known in general. 
In particular, to the best of the authors' knowledge, no explicit description of $\widehat G_H$ is known for any non-symmetric strongly isotropy irreducible space $G/H$, except for two very special cases, namely, when $G/H$ is covered by $\op{G}_2/\SU(3)$ or by $\Spin(7)/\op{G}_2$.
These two spaces endowed with the standard metrics are isometric to round spheres of dimension six and seven respectively. 
Furthermore, they are the only Gelfand pairs (or spherical pairs) among non-symmetric strongly isotropy irreducible spaces. 

For $G/H$ a simply connected strongly isotropy irreducible space, the authors in collaboration with Alejandro Tolcachier computed explicitely the smallest positive eigenvalue of $\Spec(G/H,g_{\st})$ (see \cite[\S3]{LauretRossiTolcachier-Saloff-Coste}), namely,
\begin{equation}
\lambda_1(G/H,g_{\st})
:= \min \left\{\lambda^\pi: \pi\in\widehat G_H\smallsetminus\{1_G\}\right\}
. 
\end{equation}
Tables~\ref{table:isotirred-families}--\ref{table:isotirred-isolated} show the highest weight $\Lambda_1$ satisfying $\lambda_1(G/H,g_{\st})=\lambda^{\pi_{\Lambda_1}}$. 
Moreover, the main result of \cite{LauretRossiTolcachier-Saloff-Coste} states that 
\begin{equation}
1<\frac{\lambda_1(G/H,g_{\st})}{E(G/H,g_{\st})}\leq 16 
\end{equation}
for an arbitrary strongly isotropy irreducible space $G/H$, where $E(G/H,g_{\st})$ stands for the Einstein constant of $(G/H,g_{\st})$.

The first contribution of this paper is a second explicit eigenvalue of $\Spec(G/H,g_{\st})$ for each simply connected strongly isotropy irreducible space, namely,  $\lambda^{\pi_{\Lambda_2}}$ as shown in Tables~\ref{table:isotirred-families}--\ref{table:isotirred-isolated}. 
This new eigenvalue is essentially new in the sense that $\{\Lambda_1,\Lambda_2\}$ are linearly independent, so $a_1\Lambda_1+a_2\Lambda_2\in\PP^+(G,H)$ for $a_1,a_2\in\Z_{\geq0}$ are all distinct. 
This is done in \S\ref{sec:Lambda_2}.

We next explain a consequence of the above fact, which motivated this paper. 
Let $(G/H,g)$ be an arbitrary compact homogeneous Riemannian manifold. 
The $G$-invariant metric $g$ on $M:=G/H$ corresponds to an $\Ad(H)$-invariant inner product $\innerdots_g$ on a fixed reductive complement $\fp$ of $\fh$ in $\fg$ (i.e.\ $\fg=\fh\oplus\fp$ and $\Ad(h)\cdot\fp\subset\fp$ for all $h\in H$). 
For each $a\in G$, let $\varphi_a:M\to M$ denote the map given by $\varphi_a(xH)=axH$. 

Since the Laplace-Beltrami operator $\Delta$ associated to $(M,g)$ commutes with isometries and $\varphi_a\in\Iso(M,g)$ for every $a\in G$, 
we have that 
$\Delta_g  (f\circ \varphi_a) = \Delta_g(f)\circ \varphi_a$. 
The left regular representation of $G$ on $C^\infty(M,\R)\subset L^2(M,\R)$ given by $a\cdot f=f\circ \varphi_{a^{-1}}$ preserves the eigenspaces of $\Delta$. 
Indeed, if $\Delta(f)=\lambda\, f$ for $f\in C^\infty(M,\R)$ and $\lambda\in \Spec(G/H,g)$, then 
\begin{align*}
\Delta (a\cdot f)&
= \Delta (f\circ \varphi_{a^{-1}}) 
= \Delta(f)\circ \varphi_{a^{-1}}
= (\lambda \, f)\circ \varphi_{a^{-1}}
= \lambda \, (f\circ \varphi_{a^{-1}})
= \lambda \, (a\cdot f)
\quad\forall\, a\in G.
\end{align*}
Consequently, every eigenspace 
\begin{equation}
\op{Eig}(\lambda):=\{f\in C^\infty(M,\R): \Delta(f)=\lambda\, f\}
\qquad (\lambda\in\Spec(G/H,g))
\end{equation}
of $\Delta$, which is always of finite dimension, is a real $G$-module, or equivalently, a real representation of $G$.

Berger, Gauduchon and Mazet~\cite[Prop.~C.I.8]{BergerGauduchonMazet-LNM} proved that if $H^0$ acts transitively on the unit sphere $\{X\in\fp: \inner{X}{X}_g=1\}$ of $\fp$, then all eigenspaces of the Laplace-Beltrami operator of $(G/H,g)$ are $G$-irreducible. 
The above condition holds for each compact rank one symmetric space (CROSS), and additionally for the  (non-symmetric) strongly isotropy irreducible spaces covered by $\op{G}_2/\SU(3)$ and $\Spin(7)/\op{G}_2$. 

More recently, for any irreducible compact symmetric space $G/K$ of real rank at least two, 
Petrecca and Röser~\cite[Thm.~4]{PetreccaRoeser18} found one eigenspace of the Laplacian that is not $G$-irreducible. 

Combining the last two mentioned results it follows (see \cite[Thm.~4]{PetreccaRoeser18}), for a given irreducible compact symmetric space $G/K$, that 
\begin{equation}
\text{$\op{Eig}(\lambda)$ is $G$-irreducible for all $\lambda\in\Spec(G/K,g_{\st})$ if and only if $G/K$ is a CROSS.}
\end{equation}
The main result of this article extends this characterization to simply connected strongly isotropy irreducible spaces.

\begin{theorem}\label{thm1:main}
Let $G/H$ be a compact simply connected strongly isotropy irreducible space, where $G$ is a compact connected Lie group whose action on $G/H$ is almost effective. 
Then, all eigenspaces of the Laplace-Beltrami operator associated to $(G/H,g_{\st})$ are $G$-irreducible 
if and only if 
$G/H$ is a CROSS, or $(G,H)$ equals $(\op{G}_2,\SU(3))$ or $(\Spin(7),\op{G}_2)$. 
\end{theorem}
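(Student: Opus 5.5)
The "if" direction follows from results already quoted. For a CROSS, and for $(\op{G}_2,\SU(3))$ and $(\Spin(7),\op{G}_2)$, the isotropy group $H^0$ acts transitively on the unit sphere of $\fp$. Hence \cite[Prop.~C.I.8]{BergerGauduchonMazet-LNM} shows that every eigenspace is $G$-irreducible. For the converse, when $G/H$ is an irreducible symmetric space of rank at least two, we can invoke \cite[Thm.~4]{PetreccaRoeser18}. What remains is the non-symmetric strongly isotropy irreducible spaces in Tables~\ref{table:isotirred-families}--\ref{table:isotirred-isolated}, other than the two exceptional pairs. For each of these we must exhibit an eigenvalue whose eigenspace is reducible.

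By \eqref{eq:Spec(G/H,g_st)}, the eigenspace $\op{Eig}(\lambda)$, complexified, is isomorphic to $\bigoplus_{\pi\in\widehat G_H,\ \lambda^\pi=\lambda} V_\pi\otimes V_\pi^H$. It is therefore reducible as soon as either some $\pi\in\widehat G_H$ has $\dim V_\pi^H\ge2$, or two non-isomorphic spherical representations (not complex conjugate to each other, so that real irreducibility also fails) share the same Casimir eigenvalue. I would aim for the second mechanism using the monoid structure of $\PP^+(G,H)$ and the two linearly independent spherical weights $\Lambda_1,\Lambda_2$ from \S\ref{sec:Lambda_2}. Since $\lambda^{\pi_\Lambda}=\inner{\Lambda}{\Lambda+2\rho}$ is a quadratic function, all $a_1\Lambda_1+a_2\Lambda_2$ with $a_1,a_2\in\Z_{\ge0}$ are spherical. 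Consider
\[
q(a_1,a_2)=a_1^2\|\Lambda_1\|^2+2a_1a_2\inner{\Lambda_1}{\Lambda_2}+a_2^2\|\Lambda_2\|^2+2a_1\inner{\Lambda_1}{\rho}+2a_2\inner{\Lambda_2}{\rho}.
\]
It then suffices to find distinct pairs $(a_1,a_2)\neq(b_1,b_2)$ with $q(a_1,a_2)=q(b_1,b_2)$. Because all coefficients are rational, a simpler version works: find $a,b>0$ with $q(a,0)=q(0,b)$. This means solving $a^2\|\Lambda_1\|^2+2a\inner{\Lambda_1}{\rho}=b^2\|\Lambda_2\|^2+2b\inner{\Lambda_2}{\rho}$ over positive integers. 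If that fails, allow mixed pairs, for instance comparing $q(k,0)$ with $q(m,n)$. This is a Pell-type or quadratic Diophantine problem, and I would solve it table row by row (with parameter $n$ for the families).

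A more robust route, and my first attempt, is to pick any spherical $\Lambda_0\neq0$ and consider the $2$-dimensional cone $\{a_1\Lambda_1+a_2\Lambda_2\}$. The quadratic form $q$ restricted to the line from $(a_1,a_2)$ toward $(a_1+t\,u,a_2-t\,v)$ can be made to have a rational second root. Choosing a direction $(u,-v)\in\Z^2$ along which the quadratic part vanishes is impossible, because the form is positive definite. Instead, I would use the level-set argument: the conic $q(a_1,a_2)=c$ passing through one integer point $(a_1,a_2)$ has a second rational intersection with any rational line through it. Scaling by a common denominator, which is legitimate because $q(Na_1,Na_2)$ is not homogeneous, requires care. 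The cleaner fix is to use the symmetry $q(a_1,a_2)=q(b_1,b_2)$ with $b_i=a_i+$ a common shift and to solve the resulting linear condition in the larger parameter.

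I expect the main obstacle to be exactly this Diophantine step, carried out uniformly across the 10 families and 33 isolated cases using the explicit data $\|\Lambda_i\|^2$, $\inner{\Lambda_1}{\Lambda_2}$, $\inner{\Lambda_i}{\rho}$ from the tables. A secondary concern is making sure the coincident representations give a genuinely reducible real module, for instance when $\pi_\Lambda$ and its dual are both spherical and of complex type. Taking two weights that are not dual to each other handles this, since duality preserves $\lambda^\pi$ but maps $\Lambda$ to $-w_0\Lambda$. In fact, if some spherical $\Lambda$ is not fixed by $-w_0$ and $\pi_\Lambda$ is of real type, this already gives a quick alternative in those cases.
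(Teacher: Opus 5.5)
\paragraph{Gap: the equal-eigenvalue step is not proved.}
Your reductions agree with the paper. The ``if'' direction follows from \cite[Prop.~C.I.8]{BergerGauduchonMazet-LNM}, and the symmetric case from \cite[Thm.~4]{PetreccaRoeser18}. You also correctly note that two spherical $\pi_1\not\simeq\pi_2$, $\pi_1\not\simeq\pi_2^*$ with $\lambda^{\pi_1}=\lambda^{\pi_2}$ force a reducible real eigenspace; this is condition~\eqref{item(a)} of Proposition~\ref{prop:PetreccaRoser}.

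The gap is the step that carries all the content. You never show that $q$ takes equal values at two distinct points of $\Z_{\geq0}\times\Z_{\geq0}$, and you say so yourself. None of your routes closes it:
\begin{itemize}
\item Rational coefficients do not make $q(a,0)=q(0,b)$ solvable in positive integers; compare $a^2=2b^2$.
\item A row-by-row Diophantine analysis is not carried out, and it would have to be uniform in $n$ for the ten families.
\item A common shift $b_i=a_i+s$, as you write it, can never work. Since $\langle\Lambda_i,\Lambda_j\rangle\geq0$ and $\langle\Lambda_i,\rho\rangle>0$, $q$ is strictly increasing in each variable on the quadrant, so any collision needs $(a_1-b_1)(a_2-b_2)<0$.
\item The level-set argument only yields a \emph{rational} second point on a line. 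Getting a lattice point inside the quadrant is the whole difficulty. Your argument uses nothing beyond rationality and non-vanishing of the quadratic part along the chosen direction. Both also hold for the Cantor pairing polynomial $\frac12(x+y)(x+y+1)+y$ along every direction other than $(1,-1)$, yet it is injective on $\Z_{\geq0}^2$. So non-degeneracy of the quadratic part must enter in an essential, genuinely number-theoretic way.
\end{itemize}

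\paragraph{The missing ingredient and two further points.}
What you need is Gjaldb{\ae}k's theorem \cite[Thm.~1]{Gjaldbaek}: an integer-valued quadratic polynomial with non-zero discriminant is not injective on the lattice points of any affine convex cone. Apply it to $c\,q$, with $c\in\N$ clearing denominators. Its discriminant is negative by Cauchy--Schwarz, since $\Lambda_1,\Lambda_2$ are linearly independent. This gives the collision for every space at once, with no case analysis; this is Theorem~\ref{thm:standard-fails}.

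Because you do not control which pair collides, you then need \eqref{eq:condition-duality} for \emph{all} pairs, not just for weights you pick. The paper verifies this on the tables, for instance via the self-duality criterion in Theorem~\ref{thm:standard-fails}.

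Finally, your closing remark is incorrect. If $-w_0\Lambda\neq\Lambda$, then $\pi_\Lambda\not\simeq\pi_\Lambda^*$ is of complex type, not real type. Moreover $\pi_\Lambda\oplus\pi_\Lambda^*$ can be an irreducible real module, so this gives nothing for real eigenspaces.
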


In words, Theorem~\ref{thm1:main} ensures that the only simply connected strongly isotropy irreducible spaces $G/H$ satisfying that $\op{Eig}(\lambda)$ is $G$-irreducible for all $\lambda\in\Spec(G/H,g_{\st})$, are those found by Berger, Gauduchon and Mazet \cite[Prop.~C.I.8]{BergerGauduchonMazet-LNM}.

The proof of Theorem~\ref{thm1:main} follows from Theorem~\ref{thm:standard-fails}, which provides a sufficient condition for an arbitrary standard homogeneous space $(G/H,g_{\st})$ to have a Laplacian with at least one $G$-reducible eigenspace.
This sufficient condition requires the existence of two linearly independent elements $\Lambda_1,\Lambda_2$ in $\PP^+(G,H)$, as the one given in Tables~\ref{table:isotirred-families}--\ref{table:isotirred-isolated}. 
In particular, the proof of Theorem~\ref{thm1:main} gives an alternative and shorter proof for \cite[Thm.~4]{PetreccaRoeser18}, proving that any simply connected irreducible symmetric space of real rank $\geq2$ has at least one $G$-reducible eigenspace.

\subsection*{Disclosure}

The proofs in this article were discovered, as in `the good old days', without the help of AI. 
Claude was used only to improve the wording in a few places. 
This will surely be the authors' last paper made in this way.

%\subsection*{Acknowledgments}

\section{Preliminaries}\label{sec:preliminaries}

In this subsection we set some Lie theoretical conventions and notations, as well as we recall some results.
For further details, see \cite[\S2]{LauretTolcachier-nu-stab} and \cite[\S2]{LauretRossiTolcachier-Saloff-Coste}, from which the material presented here was extracted.
The reader is invited to skip this subsection initially and return to it as needed.

Let $G$ be a compact semisimple Lie group with Lie algebra $\fg$. 
Let $T$ be a maximal torus of $G$, so $\ft_\C$ is a Cartan subalgebra of $\fg_\C$. 
We pick a positive root systems $\Phi^+(\fg_\C,\ft_\C)$, and we denote by $\omega_1,\omega_2,\dots$ the corresponding fundamental weights.
The lattice $\PP(\fg_\C)$ of algebraically integral weights can be written as $\PP(\fg_\C):=\bigoplus_{j} \Z\omega_j$, and its subset of dominant elements is $\PP^+(\fg_\C)=\bigoplus_{j} \Z_{\geq0}\omega_j$. 

All representations of $G$ will be assumed complex and finite dimensional, unless otherwise explicitly stated.
Given $\pi$ a representation of $G$, we denote by $V_\pi$ its underlying $\C$-vector space, i.e.\  $\pi:G\to\GL(V_\pi)$ is a continuous morphism. 

The highest weight theorem (see e.g.\ \cite[Thm.~5.5]{Knapp-book-beyond}) tells us that $\widehat G$ is in correspondence with $\PP^+(G):=\PP(G)\cap \PP^+(\fg_\C)$, where $\PP(G)$ denotes the sublattice of $\PP(\fg_\C)$ given by $G$-integral weights. 
For $\Lambda\in\PP^+(G)$, we denote by $\pi_{\Lambda}$ the (unique up to equivalence) irreducible representation of $G$ with highest weight $\Lambda$. 
Sometimes, for $\Lambda\in\PP^+(\fg_\C)$, we will also denote by $\pi_{\Lambda}$ the irreducible representation of $\fg_\C$ with highest weight $\Lambda$, even if $\Lambda$ is not $G$-integral. 

Let $\kil_\fg$ denote the Killing form of $\fg$, and let $\kil_\fg^*$ be its extension to $\ft_\C^*$. 
For $\Lambda\in\PP^+(\fg_\C)$, the scalar $\lambda^{\pi_{\lambda}}$ determined by $\pi_{\Lambda}(\Cas_{\fg}) = -\lambda^{\pi_{\Lambda}}\, \Id_{V_\pi}$ is given by (see \cite[Prop.~5.28]{Knapp-book-beyond})
\begin{equation}\label{eq:Casimir} 
\lambda^{\pi_{\lambda}} = \kil_\fg^* (\Lambda+2\rho_\fg,\Lambda),
\end{equation}
where $\rho_{\fg}$ is half the sum of all positive roots. 

Let $H$ be a closed subgroup of $G$.
We denote by $\fh$ the corresponding Lie subalgebra of $\fg$. 
We will always assume that the maximal torus $T$ of $G$ satisfies that $T\cap H^0$ is a maximal torus of $H^0$ (the trivial connected component of $H$).  
Note that $\fh$ is not necessarily semisimple. 

We pick a compatible positive root system associated to the maximal abelian subalgebra $(\fh\cap\ft)_\C$ of $\fh_\C$. 
Let $\eta_1,\eta_2,\dots,$ denote the corresponding fundamental weights. 
The irreducible representations of $H$ will be denoted by $\sigma$.

\begin{notation}\label{not:irrep-G-H}
For complex simple Lie algebras, we use Bourbaki's choice (see \cite[\S{}VI.4]{Bourbaki-Lie4-6}) for the ordering of simple roots, fundamental weights (see also \cite[Table~1]{LauretTolcachier-nu-stab}), and the basis of the space containing the dual of the Cartan subalgebra. 
\end{notation}

Sometimes we will have $\fh=\fh_1\oplus\fh_2$, with $\fh_1,\fh_2$ ideals of $\fh$. 
Any irreducible representation of $\fh$ is of the form $\sigma\widehat\otimes\sigma'$, with $\sigma$ (resp.\ $\sigma'$) an irreducible representation of $\fh_1$ (resp.\ $\fh_2$). 
Here $\widehat\otimes$ stands for the outer tensor product. 
Similarly, the highest weight of $\sigma\,\widehat\otimes\, \sigma'$ is $\zeta+\zeta'$, with $\zeta$ (resp.\ $\zeta'$) the highest weight of $\sigma$ (resp.\ $\sigma'$). 
In this situation we will denote by $\eta_1,\eta_2,\dots$ (resp.\ $\eta_1',\eta_2',\dots$) the fundamental weights of $\fh_1$ (resp.\ $\fh_2$).

We conclude the section with some formulas that will be used for $\fg$ and $\fh$. 
For this reason, we denote by $\fl$ an arbitrary compact Lie algebra, and $\fl_\C$ stands for its complexification.

\begin{remark}[Exterior and symmetric products of the standard representation] \label{rem:ExtSym-standard}
Let $\theta$ be a finite dimensional representation of $\fl_\C$. 
We denote by $\bigwedge^p(\theta)$  (resp.\ $\Sym^k(\theta)$) the $p$-exterior (resp.\ the $k$-symmetric) power of $\theta$, with underlying vector space $\bigwedge^p (V_\theta)$ (resp.\ $\Sym^k(V_\theta)$), for any integer $0\leq p\leq \dim V_\theta$ (resp.\ $k\geq0$). 
Notice that $\Lambda^0(\theta) =\Sym^0(\theta) =1_{\fl_\C}$ and $\Lambda^1(\theta) =\Sym^1(\theta) =\theta$.

If $\fl_\C$ is simple of type $A_n$, we have that $\pi_{\omega_p}\simeq \textstyle{\bigwedge^p}(\pi_{\omega_1})$,  $\pi_{k\omega_1}\simeq \Sym^k(\pi_{\omega_1})$, and $\pi_{\omega_p}^*\simeq\pi_{\omega_{n+1-p}}$ for all integers $k\geq1$, $1\leq p\leq n$. 

If $\fl_\C$ is simple of type $B_n$ or $D_n$, $\omega_p=\ee_1+\dots+\ee_p$ if $1\leq p\leq n-1$ for type $B_n$ and if $1\leq p\leq n-2$ for type $D_n$, 
$\Sym^k(\pi_{\omega_1})\simeq \pi_{k\omega_1}\oplus \Sym^{k-2}(\pi_{\omega_1})$ 
for all integers $k\geq2$,
and 
\begin{equation*}
\textstyle{\bigwedge^p}(\pi_{\omega_1})
\simeq 
\begin{cases}
\pi_{\ee_1+\dots+\ee_p} &\quad\text{if }1\leq p\leq n-1, \\
\pi_{\ee_1+\dots+\ee_n} &\quad\text{if }p=n\text{ and $\fl_\C$ is of type $B_n$}, \\
\pi_{\ee_1+\dots+\ee_n}\oplus \pi_{\ee_1+\dots+\ee_{n-1}-\ee_n} &\quad\text{if }p=n\text{ and $\fl_\C$ is of type $D_n$}.
\end{cases}
\end{equation*}

If $\fl_\C$ is simple of type $C_n$,  $\textstyle{\bigwedge^p}(\pi_{\omega_1}) \simeq \pi_{\omega_p}\oplus \textstyle{\bigwedge^{p-2}}(\pi_{\omega_1})$, and
$\Sym^k(\pi_{\omega_1})\simeq \pi_{k\omega_1}$ 
for all integers $k\geq0$, $1\leq p\leq n$. 
\end{remark}

\begin{remark}\label{rem:rhoxrho^*}
Assume that $\fl_\C$ is simple. 
For irreducible representations $\theta_1,\theta_2$ of $\fl_\C$, it is well known that $\theta_1\otimes\theta_2$ contains the trivial representation $1_{\fl_\C}$ of $\fl_\C$ if and only if $\theta_1^*\simeq\theta_2$.
In this case, $1_{\fl_\C}$ occurs exactly once, which is equivalent to $\dim (V_{\theta_1}\otimes V_{\theta_1^*})^{\fl_\C}=1$. 
Moreover, when $\theta:=\theta_1=\theta_2$, 
$\theta\otimes\theta \simeq \Sym^2(\theta) \oplus {\textstyle\bigwedge^2}(\theta)$ and, 
	$1_{\fh_\C}$ occurs in $\Sym^2(\theta)$ if and only if $\theta$ is of real type, and 
	$1_{\fh_\C}$ occurs in ${\textstyle\bigwedge^2}(\theta)$ if and only if $\theta$ is of quaternionic type. 
\end{remark}

\begin{remark}[Tensor formulas]\label{rem:KoikeTerada}
Assume $\fl_\C$ is simple of type $B_n$, $C_n$ or $D_n$, with $n\geq3$. 
For non-negative integers $s\leq r$ and $q\leq p\leq n-2$, one has that (see e.g.\ \cite[p.~509--510]{KoikeTerada87} or \cite[\S1.64--1.66]{DictionaryOnLieAlgebras})
\begin{align*}
	\pi_{r\omega_1}\otimes \pi_{s\omega_1}&= \bigoplus_{j=0}^{s}\bigoplus_{i=0}^{j} \pi_{(r+s-2j)\omega_1+(j-i)\omega_2},\\
	\pi_{r\omega_1}\otimes \pi_{\omega_p}&= \pi_{r\omega_1+\omega_p}\oplus \pi_{(r-1)\omega_1+\omega_{p+1}} \oplus \pi_{(r-2)\omega_1+\omega_{p}}\oplus \pi_{(r-1)\omega_1+\omega_{p-1}},\\
	\pi_{\omega_p}\otimes \pi_{\omega_q}&= \bigoplus_{j=0}^{q}\bigoplus_{i=0}^{j} \pi_{\omega_{p+q-j-i}+\omega_{j-i}}.
\end{align*}
Here, $\omega_k=0$ if $k>n$. 
The assumption $p\leq n-2$ is not essential. 
Formulas also hold for $p=n-1$ for type $B_n$ and $p=n-1,n$ for type $C_n$.
Moreover, they hold for $p=n$ for type $B_n$ replacing $\omega_n$ by $\ee_1+\dots+\ee_n=2\omega_n$, and for $p=n-1,n$ for type $D_n$ replacing $\omega_{n-1}$ and $\omega_n$ by $\ee_1+\dots+\ee_{n-1}=\omega_{n-1}+\omega_n$ and $\ee_1+\dots+\ee_n=2\omega_n$ respectively. 

When $\fl_\C$ is of type $A_n$, similar identities are provided by Pieri's formula. 
We will directly cite \cite[Prop.~15.25]{FultonHarris-book} each time we use it to avoid stating it here. 
\end{remark}

\begin{remark}[Exterior and symmetric product]
\label{rem:ExtSym(suma)o(tensor)}
If $\theta_1,\theta_2$ are finite-dimensional representations of a complex semisimple Lie algebra $\fl_\C$, then we have 
\begin{equation*}
\label{eq2:Lambda^2(V+W)-Sym^2(V+W)}
\begin{aligned}
{\textstyle \bigwedge^2}(\theta_1\oplus \theta_2) &
\simeq {\textstyle \bigwedge^2}(\theta_1)
\oplus {\textstyle \bigwedge^2}(\theta_2)
\oplus \theta_1\otimes \,\theta_2
,
\\
\Sym^2(\theta_1\oplus \theta_2) &
\simeq \Sym^2(\theta_1)
\oplus \Sym^2(\theta_2)
\oplus \theta_1\otimes \,\theta_2
.
\end{aligned}
\end{equation*}

Now, if $\tau_1,\tau_2$ are finite-dimensional representations of arbitrary complex semisimple Lie algebras $(\fl_1)_\C,(\fl_2)_\C$ respectively, then 
\begin{equation*}
\label{eq2:Lambda^2(VxW)-Sym^2(VxW)}
\begin{aligned}
{\textstyle \bigwedge^2}(\tau_1\widehat\otimes \, \tau_2) &
\simeq {\textstyle \bigwedge^2}(\tau_1) \widehat\otimes \, \Sym^2(\tau_2)
\oplus \Sym^2(\tau_1) \widehat\otimes \, {\textstyle \bigwedge^2}(\tau_2)
,
\\
\Sym^2(\tau_1\widehat\otimes \, \tau_2) &
\simeq \Sym^2(\tau_1) \widehat\otimes \, \Sym^2(\tau_2)
\oplus {\textstyle \bigwedge^2}(\tau_1) \widehat\otimes \, {\textstyle \bigwedge^2}(\tau_2)
.
\end{aligned}
\end{equation*}
\end{remark}

\begin{table}
\caption{Families of non-symmetric strongly isotropy irreducible spaces.}\label{table:isotirred-families}
\renewcommand{\arraystretch}{1.3}
$
\begin{array}{ccccccccccc} 
\text{No.} & \fg & \fh & \rho &\text{Cond.} &
\Lambda_1 & \Lambda_2 
\\ \hline \hline 
\textup{I} & 
\su\big(\frac{n(n-1)}{2}\big) & 
\su(n) & 
\sigma_{\eta_2} &
	\text{\tiny 
	\begin{tabular}{c}
		$n\geq 5$ \\[-3pt]
		$d=\frac{n}{2}$, $n$ even\\[-3pt]
		$d=n$, $n$ odd
	\end{tabular}
	}
& \underset{(1)}{2\omega_1+2\omega_{N-1}}
& \omega_3+\omega_{N-3}
\\ \hline
\textup{II} & 
\su\big(\frac{n(n+1)}{2}\big) & 
\su(n) & 
\sigma_{2\eta_1}  &
	\text{\tiny 
	\begin{tabular}{c}
		$n\geq 3$ \\[-3pt]
		$d=\frac{n}{2}$, $n$ even\\[-3pt]
		$d=n$, $n$ odd
	\end{tabular}
	}
& \underset{(2)}{2\omega_1+2\omega_{N-1}}
& \omega_3+\omega_{N-3}
\\ \hline 
\textup{III}	& 
\su(pq) &
\su(p)\oplus\su(q) & 
\sigma_{\eta_1}\widehat\otimes \,\, \sigma_{\eta_1'}' & 
	\textup{\tiny $\begin{array}{c} 
		2\leq p\leq q \\[-3pt] 
		p+q\neq 4 \\[-3pt] 
		d=\op{lcm}(p,q)
	\end{array}$ } 
& \omega_2+\omega_{pq-2}
& 2\omega_1+2\omega_{pq-1}
\\ \hline 
\textup{IV} & 
\spp(n) &
\spp(1)\oplus\so(n) & 
\sigma_{\eta_1}\widehat\otimes \,\, \sigma_{\eta_1'}' &
n\geq 3 
& 2\omega_2\ 
& 4\omega_1\ 
\\ \hline 
\textup{V} & 
\so(n^2-1) &
\su(n) & 
\sigma_{\eta_1+\eta_{n-1}} &
n\geq 3 
& \ \ \, \omega_3 {}^{(3)}
& \omega_7
\\ \hline 
\textup{VI} & 
\so(2n^2-n-1) &
\spp(n) & 
\sigma_{\eta_2}  & 
n\geq 3 
& 3\omega_1
& { 2\omega_2}
\\ \hline 
\textup{VII} & 
\so(2n^2+n) &
\spp(n) & 
\sigma_{2\eta_1} &
n\geq 3 
& \omega_3
& \omega_7
\\ \hline 
\textup{VIII} & 
\so(4n) & 
\spp(1)\times\spp(n) &
\sigma_{\eta_1}\widehat\otimes \,\, \sigma_{\eta_1'}' & 
n\geq 2 
& \ \ \, \omega_4 {}^{(4)}
& 2\omega_2
\\ \hline 
\textup{IX} & 
\so\big(\frac{n(n-1)}{2}\big) &
\so(n) &
\begin{array}{c}
\sigma_{2\eta_2} \\
\sigma_{\eta_2}
\end{array} &
\begin{array}{c}
n=5 \\
n\geq 7
\end{array}
& \omega_3
& \omega_7
\\ \hline 
\textup{X} & 
\so\big(\frac{(n-1)(n+2)}{2}\big) &
\so(n) &
\sigma_{2\eta_1} & 
n\geq 5 
& 3\omega_1
& { 2\omega_2} 
\\ \hline
\end{array}
$

\smallskip 
\small 

$N=\frac{n(n-1)}{2}$ in Family~I and 
$N=\frac{n(n+1)}{2}$ in Family~II. 
(1) $\Lambda_1=3\omega_1$ for $n=6$. 
(2) $\Lambda_1=3\omega_1$ for $n=3$. 
{(3) $\Lambda_1=\ee_1+\ee_2+\ee_3=\omega_3+\omega_4$ for $n=3$.
(4) $\Lambda_1=\ee_1+\ee_2+\ee_3+\ee_4=2\omega_4$ for $n=2$.}

%(3) $\Lambda_1=8\omega_1$ for $n=4$. 
%(4) $\Lambda_2=2\omega_2$ for $n=4$ in order $\Lambda_1,\Lambda_2$ are linearly independent. 

\medskip

\end{table}

\begin{table}
\caption{Isolated cases of non-symmetric strongly isotropy irreducible spaces.}\label{table:isotirred-isolated}
\renewcommand{\arraystretch}{1.2}
\begin{minipage}[t]{0.55\textwidth}
$
\begin{array}[t]{c c c c c c } 
\text{No.} & \fg & \fh & \rho & \Lambda_1 & \Lambda_2 
\\ [0.5ex]
\hline 
\hline 
 1&\su(16) & \so(10)  & \sigma_{\eta_4} & 2\omega_2 & 2\omega_4 
\\  \hline 
 2&\su(27) & \fe_6    & \sigma_{\eta_1} & 3\omega_1 & 2\omega_3 
\\ \hline
 3&\so(7)  & \fg_2  & \sigma_{\eta_1} & \omega_3 & 
\\ \hline
 4& \so(133) & \fe_7 & \sigma_{\eta_3}  & \omega_3 & \omega_{11} 
\\ \hline
 5&\spp(2)  & \so(3)  & \sigma_{3\eta_1} & 4\omega_1 & 3\omega_2 
\\ \hline
 6&\spp(7)  & \spp(3) & \sigma_{\eta_3}  & 4\omega_1 & 2\omega_4 
\\ \hline
 7&\spp(10) & \su(6)  & \sigma_{\eta_3}  & 4\omega_1 & 2\omega_3 
\\ \hline
 8&\spp(16) & \so(12) & \sigma_{\eta_5} &  4\omega_1 & 2\omega_4 
\\ \hline
 9&\spp(28) & \fe_7  &  \sigma_{\eta_7} &  4\omega_1 & 3\omega_2 
\\ \hline
10&\so(14) & \fg_2  &  \sigma_{\eta_2} & \omega_3 & 2\omega_{2} 
\\ \hline
11& \so(16) & \so(9) &  \sigma_{\eta_4} &  2\omega_2 & 2\omega_1+\omega_4 
\\ \hline
12&\so(26) & \ff_4  &  \sigma_{\eta_4} & 3\omega_1 & 2\omega_2 
\\ \hline
13&\so(42) & \spp(4)& \sigma_{\eta_4}  & 2\omega_2 & 2\omega_1+\omega_4 
\\ \hline
14&\so(52) & \ff_4&  \sigma_{\eta_1}  & \omega_3 & \omega_{11} 
\\ \hline
15&\so(70) & \su(8) & \sigma_{\eta_4}  & 2\omega_2 & \omega_6 
\\ \hline
16& \so(78) & \fe_6 & \sigma_{\eta_2}   & \omega_3 & \omega_{9} 
\\ \hline
17&\so(128) & \so(16) & \sigma_{\eta_7} & 2\omega_2 & 2\omega_1+\omega_4 
\\ \hline
18&\so(248) & \fe_8 &  \sigma_{\eta_8}  & \omega_3 & \omega_{15} 
\\ \hline 
\end{array}
$
\end{minipage}
\begin{minipage}[t]{0.44\textwidth}
$
\begin{array}[t]{ccccc} 
\text{No.} & \fg & \fh %& \rho 
& \Lambda_1 & \Lambda_2 
\\ 
 \hline 
 \hline 
19& \fe_6 & \su(3) & %\sigma_{2(\eta_1+\eta_2)} &
	2\omega_1 & 3\omega_2 
\\  \hline 
20& \fe_6 & \su(3) & %\sigma_{\eta_1}\otimes \,\sigma_{\eta_1'}'\otimes \,\sigma_0''&
	\omega_1+\omega_6 & 3\omega_1 
\\ \hline 
21& \fe_6 & \fg_2 & %\sigma_{2\eta_1}& 
	2\omega_1 & 2\omega_3 
\\ \hline  
22& \fe_6 & \su(3)\oplus\fg_2 & 
%\begin{array}{l}
%	\sigma_{\eta_1}\otimes \,\sigma_{\eta_1'}'\oplus\\[-6pt]
%	\sigma_{2\eta_2}\otimes \,\sigma_0'
%\end{array}
%& 
	\omega_1+\omega_6 & 3\omega_1 
\\ \hline 
23& \fe_7 & \su(3) & %\sigma_{6\eta_1}\otimes \,\sigma_{6\eta_2'}'& 
	2\omega_7 & 3\omega_2 
\\ \hline 
24& \fe_7 & \su(6)\oplus\su(3)& 	%\sigma_{\eta_1}\otimes \,\sigma_{\eta_1'}'& 
	\omega_6 & 2\omega_1 
\\  \hline 
25& \fe_7 & \fg_2\oplus\spp(3) & %\sigma_{\eta_1}\otimes \,\sigma_{\eta_1'}'& 
	\omega_6 & 2\omega_1 
\\ \hline 
26& \fe_7 & \so(3)\oplus\ff_4 & %\sigma_{\eta_1}\otimes \,\sigma_{\eta_1'}'& 
	\omega_6 & 2\omega_1 
\\ \hline 
27& \fe_8 & \su(9) & %\sigma_{\eta_3}\oplus \sigma_{\eta_6}'\oplus \sigma_{\eta_1+\eta_8}''& 
	\omega_7 & \omega_6 
\\ \hline 
28& \fe_8 & \fe_6\oplus \su(3) &
	%\text{(*)}	& 
	\omega_1 & \omega_7 
\\ \hline 
29& \fe_8 & \fg_2\oplus\ff_4 & %?? & 
	\omega_1 & \omega_6 
\\ \hline 
30& \ff_4 & \su(3)\oplus\su(3) & %?? & 
	\omega_3 & 2\omega_2 
\\ \hline 
31& \ff_4 & \so(3)\oplus\fg_2 & %?? & 
	2\omega_3 & 2\omega_4 
\\ \hline 
32& \fg_2 & \so(3) & %?? & 
	2\omega_2 & 4\omega_1 
\\ \hline 
33& \fg_2 & \su(3) & {\omega_1} 
\\  \hline 
\end{array}
$

\medskip 

The embeddings $\rho:\fh_\C\hookrightarrow\fg_\C$ for the cases No.~19--33 are omitted; see \cite[Table~5]{LauretTolcachier-nu-stab}.
\end{minipage}
\end{table}

\section{The new Laplace eigenvalue}\label{sec:Lambda_2}

Let $G/H$ be a compact non-symmetric simply connected strongly isotropy irreducible space, where $G$ is a compact connected Lie group whose action on $G/H$ is almost effective.  
According to their classification, 
the Lie algebras $\fg,\fh$ of $G,H$ respectively must appear as a member of one of the ten families in Table~\ref{table:isotirred-families}, or as one of the 33 isolated cases in Table~\ref{table:isotirred-isolated}. 
The former spaces are numerated as in \cite{Schwahn-Lichnerowicz}, \cite{LauretTolcachier-nu-stab},  \cite{LauretRossiTolcachier-Saloff-Coste}, 
and the embedding $H\hookrightarrow G$ is determined by the corresponding embedding $\rho:\fh_\C\to\fg_\C$ given in the tables.

It was shown in \cite{LauretRossiTolcachier-Saloff-Coste} that the smallest positive eigenvalue of the Laplace-Beltrami operator of $(G/H,g_{\st})$ is given by $\lambda_1(G/H,g_{\st})= \lambda^{\pi_{\Lambda_1}}$, where $\Lambda_1$ is the dominant weight in Tables~\ref{table:isotirred-families}--\ref{table:isotirred-isolated}. 

The main goal of this section is to prove the following result. 

\begin{theorem}\label{thm:Lambda_2}
Let $(\fg,\fh)$ be a pair of Lie algebras from Tables~\ref{table:isotirred-families}--\ref{table:isotirred-isolated}, excluding the Isolated cases No.~3 and No.~33. 
The dominant weight $\Lambda_2$ in these tables satisfies 
$$
\pi_{\Lambda_2}\in\widehat G_H. 
$$ 
Consequently, $\lambda^{\pi_{\Lambda_2}}$ is an eigenvalue of the Laplace-Beltrami operator of $(G/H,g_{\st})$ with multiplicity $\geq \dim V_{\pi_{\Lambda_2}}$. 
\end{theorem}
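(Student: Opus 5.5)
Since $H$ need not be connected, I would first reduce to the Lie algebra level. Because $G/H$ is simply connected and $G$ is connected, $H$ is connected. Hence $V_\pi^H=V_\pi^{\fh}$, and it suffices to show that $\pi_{\Lambda_2}|_{\fh_\C}$ contains the trivial representation $1_{\fh_\C}$. I also need $\Lambda_2\in\PP^+(G)$. For the classical families this is handled by the covering conventions in the tables: $G$ is taken so that $G/H$ is simply connected, e.g.\ $\SU(N)/\mathbb Z_d$ in Families I–III. Checking that $\Lambda_2$ is $G$-integral is a routine congruence check (e.g.\ $\omega_3+\omega_{N-3}$ lies in the root lattice of $A_{N-1}$). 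The multiplicity claim then follows directly from \eqref{eq:Spec(G/H,g_st)}.

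The core of the argument realizes $\pi_{\Lambda_2}$ inside a tensor construction built from the defining representation $\pi_{\omega_1}$ of $\fg_\C$, whose restriction to $\fh_\C$ is $\rho$ (plus its dual in the $\su$ cases). I then find $\fh$-invariants in that construction. The types of $\Lambda_2$ in the tables fall into a few patterns, and I treat each pattern uniformly.

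\begin{enumR}
\item $\Lambda_2=\omega_k$ for $\fg=\so(N)$ (Families V, VII, IX, Isolated 4, 14, 16, 18, and $\omega_6$ in No.~15). Here $\pi_{\omega_k}=\bigwedge^k(\rho)$ by Remark~\ref{rem:ExtSym-standard}. An $\fh$-invariant in $\bigwedge^k\rho$ with $\rho=\ad$ (the adjoint case) is given by the primitive invariants of $\fh$ of degree $k$. Since $\fh$ is simple, $\bigwedge(\fh)^{\fh}$ is an exterior algebra on generators of degrees $2m_i+1$. The degrees $3$ and $7$ (respectively $11$, $9$, $15$) arise exactly as the second primitive generator, e.g.\ $3$ and $7$ for $\su(n)$ with $n\ge3$ (via the degree-$3$ and degree-$4$ Casimirs) and $3$ and $11$ for $\ff_4$. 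This case is immediate.
\item $\Lambda_2=2\omega_2$ or $2\omega_4$, $4\omega_1$, $3\omega_2$, $2\omega_1$, $\omega_3+\omega_{N-3}$, $2\omega_1+2\omega_{N-1}$. For each of these I take an explicit $\fh$-invariant vector $v$ in a tensor power $\rho^{\otimes m}$, namely a degree-$m$ invariant polynomial or form on $\rho$. I then project $v$ onto the Cartan (top) component $\pi_{\Lambda_2}$. For example, $\pi_{4\omega_1}=\Sym^4\rho$ for $\spp$, so I need a quartic $\fh$-invariant on $\rho$ that is not a power of the symplectic form. The same approach covers $2\omega_2$ in $\Sym^2(\bigwedge^2\rho)$ and $\omega_3+\omega_{N-3}$ in $\bigwedge^3\rho\otimes\bigwedge^3\rho^*$.
\item Exceptional $\fg$ (No.~19–32). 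Here I would use branching rules, for instance via the tables/software of \cite{LauretTolcachier-nu-stab}, to restrict $\pi_{\Lambda_2}$ to $\fh_\C$ and exhibit the trivial summand. An alternative is to write $\pi_{\Lambda_2}$ as a summand of $\Sym^2$ or $\bigwedge^2$ of a small representation, e.g.\ the adjoint or the $27$-dimensional one, and use Remarks~\ref{rem:rhoxrho^*} and~\ref{rem:ExtSym(suma)o(tensor)}.
\end{enumR}

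The main obstacle lies in (ii): showing that the invariant survives the projection to the top component, i.e.\ that it is not absorbed by lower summands. My strategy is a counting argument. First I compute $\dim(\text{construction})^{\fh}$, using Remarks~\ref{rem:rhoxrho^*}–\ref{rem:ExtSym(suma)o(tensor)} applied to $\rho$ as an $\fh$-representation. Then I compute the $\fh$-invariants of the lower summands, which are determined via Remarks~\ref{rem:ExtSym-standard} and~\ref{rem:KoikeTerada} (e.g.\ $\Sym^2(\pi_{\omega_2})=\pi_{2\omega_2}\oplus\pi_{\omega_4}\oplus\pi_{2\omega_1}\oplus 1$ for $\so$). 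These lower summands typically have fewer invariants, often just one or zero, because $\Lambda_1$ is the lowest spherical weight and the remaining lower weights are too small. I then subtract. For instance, in Family III I would take $\Sym^2\rho\otimes\Sym^2\rho^*$ with $\rho=\sigma_{\eta_1}\widehat\otimes\sigma'_{\eta_1'}$. This has two $\fh$-invariants by Remark~\ref{rem:ExtSym(suma)o(tensor)}, while the lower part $\pi_{\omega_1+\omega_{N-1}}\oplus 1$ accounts for only one.
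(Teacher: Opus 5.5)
Your overall route is the paper's. You reduce to $\fh_\C$-invariants via connectedness of $H$, use the primitive generators of $(\bigwedge\fh)^{\fh}$ for the adjoint embeddings, realize $\pi_{\Lambda_2}$ as the top component of a tensor construction on $\pi_{\omega_1}$ and subtract the invariants of the lower components, and use branching software for exceptional $\fg$. Your Family~III computation is exactly the paper's.

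The gap is in the step you yourself single out. The subtraction only works if you know the $\fh_\C$-invariants of every lower summand. Your reason for why they are few (``$\Lambda_1$ is the lowest spherical weight and the remaining lower weights are too small'') is not valid: lying below $\Lambda_2$ in dominance order does not put the Casimir eigenvalue below $\lambda_1$. In Families~VI and~X, $\Sym^2(\bigwedge^2\rho)$ has exactly two invariants, $\pi_{2\omega_1}$ has none and $\pi_0$ has one. So your count only gives $\dim V^{\fh_\C}_{\pi_{2\omega_2}}+\dim V^{\fh_\C}_{\pi_{\omega_4}}=1$. For $\fg=\so(N)$, $\lambda^{\pi_{\omega_4}}$ and $\lambda^{\pi_{3\omega_1}}=\lambda_1$ are proportional to $4(N-4)$ and $3(N+1)$. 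Hence $\lambda^{\pi_{\omega_4}}>\lambda_1$ once $N>19$ (Family~VI with $n\geq4$, Family~X with $n\geq6$), and minimality of $\lambda_1$ says nothing there. Moreover $\pi_{\omega_4}|_{\fh_\C}\simeq\bigwedge^4\rho$ is not computable from Remarks~\ref{rem:ExtSym-standard}--\ref{rem:ExtSym(suma)o(tensor)}. Nothing in your plan rules out that the invariant sits in $\pi_{\omega_4}$ rather than $\pi_{2\omega_2}$.

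The paper closes this with a separate argument. It computes $\pi_{\omega_3}|_{\fh_\C}$ by comparing the restrictions of $\pi_{\omega_2}\otimes\pi_{\omega_1}$ and $\pi_{2\omega_1}\otimes\pi_{\omega_1}$, together with a Weyl-dimension count. It observes that this restriction does not contain $\rho$, and then deduces $V^{\fh_\C}_{\pi_{\omega_4}}=0$ from $\pi_{\omega_3}\otimes\pi_{\omega_1}\simeq\pi_{\omega_4}\oplus\pi_{\omega_1+\omega_3}\oplus\pi_{\omega_2}$. A quicker fix: $\rho$ is the isotropy representation of the symmetric space $\SU(2n)/\Sp(n)$, resp.\ $\SU(n)/\op{SO}(n)$, so $(\bigwedge^4\rho)^{\fh}$ is its fourth real cohomology, which vanishes. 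Similarly, in Family~VIII with $n\geq3$ the lower summand $\pi_{\omega_4}$ is $\pi_{\Lambda_1}$ itself, so you need its exact invariant count, not a smallness heuristic.

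Beyond this, your case list has holes.
Isolated No.~15 is filed under the adjoint argument, but there $\rho=\sigma_{\eta_4}$ of $\su(8)$ is not the adjoint representation. Primitive invariants of $\fh$ therefore say nothing about $(\bigwedge^6\rho)^{\fh}$. The paper decomposes $\bigwedge^6\sigma_{\eta_4}$ directly; alternatively, $\rho$ is the isotropy representation of $\op{E}_7/(\SU(8)/\Z_2)$, whose $H^6$ is nonzero.

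The weights $2\omega_3$ (No.~2, 7) and $2\omega_1+\omega_4$ (No.~11, 13, 17) fit none of your patterns. The paper handles No.~7 and 11 by computer and needs dedicated counting for No.~2, 13 and 17. In No.~13 and 17, the auxiliary products $\pi_{\omega_1}\otimes\pi_{\omega_3}$ and $\pi_{\omega_1}\otimes\pi_{\omega_5}$ are needed to kill $\pi_{\omega_4}$, $\pi_{\omega_1+\omega_3}$, $\pi_{\omega_1+\omega_5}$ inside $(\pi_0\oplus\pi_{2\omega_1})\otimes\pi_{\omega_4}$.

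Finally, your degree bookkeeping in (i) is off. The primitive degrees of $\su(n)$ are $3,5,\dots,2n-1$, so degree $7$ requires $n\geq4$. Also $\pi_{\omega_k}|_{\fh_\C}\simeq\bigwedge^k\rho$ fails in low rank: Family~V with $n=3,4$ and Family~IX with $n=5$, where $\fg$ is $\so(8)$, $\so(15)$, $\so(10)$ and the tabulated $\omega_7$ is either undefined or the spin weight. These entries need separate treatment.
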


Although $\lambda^{\pi_{\Lambda_1}}$ is the first eigenvalue of the Laplace-Beltrami operator, the result is not claiming that $\lambda^{\pi_{\Lambda_2}}$ is the second eigenvalue (i.e.\  the smallest positive eigenvalue $\lambda_2(G/H,g_{\st})$ strictly larger  than $\lambda_1(G/H,g_{\st})$). 
However, this occurs in some cases. 
The authors are not interested on computing explicitely $\lambda_2(G/H,g_{\st})$ for these spaces.

\begin{remark}
Note that $\Lambda_2$ is not given for the Isolated cases No.~3 and 33 in the tables. 
The corresponding pairs $(\Spin(7),\op{G}_2)$ and $(\op{G}_2,\SU(3))$ are Gelfand (also called spherical), and their spherical representations are (see \cite{Kramer-sphericalpairs})
\begin{align*}
\widehat {\Spin(7)}_{\op{G}_2}
	&= \{\pi_{k\omega_3}:k\in\Z_{\geq0}\},
&
\widehat {{(\op{G}_2)}}_{\SU(3)}
	&= \{\pi_{k\omega_1}:k\in\Z_{\geq0}\}. 
\end{align*}
In particular, there are no two linearly independent elements in $\PP^+(G,H)$ for these two cases. 
\end{remark}

The rest of the section is devoted to prove Theorem~\ref{thm:Lambda_2}.
The group $G$ is semisimple because its action on $G/H$ is almost effective. 
Furthermore, $H$ is connected because $G$ is connected and $G/H$ is simply connected. 
In particular, 
\begin{equation}\label{eq-prueba:Hcxo}
V_{\pi}^H=V_{\pi}^{\fh_\C}
= \{v\in V_\pi: d\pi(X)\cdot v=0\quad\forall \, X\in\fh_\C\}. 
\end{equation}
This allows us to restrict our attention at the Lie algebra level.

It is important to note that, for a pair $(\fg,\fh)$ in the tables, there might be several presentations of the simply connected space $G/H$ as quotient of compact Lie groups.
Wolf in \cite{Wolf68-isot-irred} obtained the effective presentation $G_0/H_0$, that can be also found in \cite[Tables~3--5]{LauretTolcachier-nu-stab}.
One can easily check in all cases that the dominant weight $\Lambda_2\in\PP^+(\fg_\C)$ is $G_0$-integral, thus $\pi_{\Lambda_2}\in  {(\widehat G_0)}_{H_0}$. 
For any other almost effective presentation $G/H$, $G$ covers $G_0$, that is, there is a central subgroup $\Gamma$ of $G$ such that $G/\Gamma\simeq G_0$. 
It follows that $\Lambda_2\in\PP^+(G)$.

It remains to show that, for each pair $(\fg,\fh)$ in Tables~\ref{table:isotirred-families}--\ref{table:isotirred-isolated} excepting for the Isolated cases No.~3 and No.~33, the weight $\Lambda_2$ satisfies
\begin{equation}
V_{\pi}^{\fh_\C}\neq0. 
\end{equation}
This is done as a case by case procedure. 
We will use that $\dim V_{\pi_{\Lambda_2}}^{\fh_\C}$ is the number of times that the trivial representation $1_{\fg_\C}$ of $\fh_\C$ occurs in the decomposition in irreducible terms of the restriction $\pi_{\Lambda_2}|_{\fh_\C}$. 
The computations are very similar to those made in \cite[\S3]{LauretRossiTolcachier-Saloff-Coste}. 
As mentioned in \S\ref{sec:preliminaries}, we will also denote by $\pi_{\Lambda}$, for any $\Lambda\in\PP^+(\fg_\C)$, the irreducible representation of $\fg_\C$ with highest weight $\Lambda$.

\subsection{Families I and II}
Set $\fh=\su(n)$, $N_1=\frac{n(n-1)}{2}$, $N_2=\frac{n(n+1)}{2}$, and $\fg_i=\su(N_i)$ for each $i=1,2$. 
The irreducible complex representations $\rho_1:=\sigma_{\eta_2}$ and $\rho_2:=\sigma_{2\eta_1}$ of $\fh_\C$ are of complex type and have dimension $N_1$ and $N_2$ respectively, so the image of $\rho_i$ is contained into $\su(V_{\rho_i})\simeq (\fg_i)_\C$. 
We want to show that $V_{\pi_{\Lambda_2}}^{\fh_\C}\neq0$ in both cases, where $\Lambda_2=\omega_3+\omega_{N_i-3}$.

We have the following tensor decomposition of $(\fg_i)_\C$-modules (see \cite[Prop 15.25]{FultonHarris-book}): 
\begin{equation*}
	\pi_{\omega_3}\otimes\pi_{\omega_{N_i-3}}
	\simeq \pi_{\omega_3+\omega_{N_i-3}} 
	\oplus \pi_{\omega_2+\omega_{N_i-2}}
	\oplus \pi_{\omega_1+\omega_{N_i-1}}
	\oplus \pi_0. 
\end{equation*}
In the proof of \cite[Thm.~3.17]{LauretRossiTolcachier-Saloff-Coste}, it was established that $V_{\pi_{\omega_1+\omega_{N_i-1}}}^{\fh_\C} =0= V_{\omega_2+\omega_{N_i-2}}^{\fh_\C}$. 
Thus
\begin{equation}\label{eq-prueba:fliaI-II}
\dim V_{\omega_3+\omega_{N_i-3}}^{\fh_\C} 
= \dim V_{\pi_{\omega_3}\otimes\pi_{\omega_{N_i-3}}} ^{\fh_\C}-1
. 
\end{equation}
Using Remark~\ref{rem:ExtSym-standard} (in the second identity below) we obtain that
\begin{align*}
\big( \pi_{\omega_3}\otimes\pi_{\omega_{N_i-3}} \big) |_{\fh_\C}&
	= \pi_{\omega_3}|_{\fh_\C} \otimes \pi_{\omega_{N_i-3}}|_{\fh_\C}
\\ & 
	=  {\textstyle\bigwedge^3}(\pi_{\omega_1})|_{\fh_\C} \otimes {\textstyle\bigwedge^3}(\pi_{\omega_{1}}^*)|_{\fh_\C}
=  {\textstyle\bigwedge^3}(\pi_{\omega_1}|_{\fh_\C})| \otimes {\textstyle\bigwedge^3}(\pi_{\omega_{1}}^*|_{\fh_\C})
	\\ &  
	\simeq {\textstyle\bigwedge}^3(\rho_i) \otimes {\textstyle\bigwedge^3}(\rho_i^*)
	= 	{\textstyle\bigwedge}^3(\rho_i) \otimes {\textstyle\bigwedge^3}(\rho_i)^*
.
\end{align*}

We claim that it is sufficient to show that ${\textstyle\bigwedge}^3(\rho_i)$ is not irreducible.
Indeed, if it decomposes as a sum of $m\geq2$ irreducible terms, then $1_{\fh_\C}$ occurs at least $m$ times in ${\textstyle\bigwedge}^3(\rho_i) \otimes {\textstyle\bigwedge^3}(\rho_i)^*$ by Remark~\ref{rem:rhoxrho^*}, so $\dim V_{\pi_{\omega_3}\otimes\pi_{\omega_{N_i-3}}} ^{\fh_\C}\geq m$, and therefore $\dim V_{\omega_3+\omega_{N_i-3}}^{\fh_\C} \geq m-1>0$ by \eqref{eq-prueba:fliaI-II}, yielding the required assertion.

We start with $i=1$. 
It is well known that ${\textstyle\bigwedge}^3(\rho_1)= {\textstyle\bigwedge}^3(\sigma_{\eta_2})$ is an $\fh_\C$-submodule of $\sigma_{\eta_2}\otimes \sigma_{\eta_2}\otimes \sigma_{\eta_2}$. By \cite[Prop 15.25]{FultonHarris-book}, 
\begin{equation*}
\sigma_{\eta_2}\otimes \sigma_{\eta_2}\otimes \sigma_{\eta_2}
\simeq 
\begin{cases}
2\sigma_{\eta_1} \oplus 3\sigma_{\eta_2+\eta_4} \oplus \sigma_{2\eta_1+\eta_4}
	\oplus \sigma_{2\eta_3} \oplus 2\sigma_{\eta_1+\eta_2+\eta_3}
	\oplus \sigma_{3\eta_2}
	&\text{ if }n=5,
\\
\sigma_{0}\oplus 2\sigma_{\eta_1+\eta_5} \oplus 3\sigma_{\eta_2+\eta_4} \oplus \sigma_{2\eta_1+\eta_4}
	\oplus \sigma_{2\eta_3} \oplus 2\sigma_{\eta_1+\eta_2+\eta_3}
	\oplus \sigma_{3\eta_2}
	&\text{ if }n=6,
\\
\sigma_{\eta_6}\oplus 2\sigma_{\eta_1+\eta_5} \oplus 3\sigma_{\eta_2+\eta_4} \oplus \sigma_{2\eta_1+\eta_4}
	\oplus \sigma_{2\eta_3} \oplus 2\sigma_{\eta_1+\eta_2+\eta_3}
	\oplus \sigma_{3\eta_2}
	&\text{ if }n\geq 7.
\end{cases}
\end{equation*}
It is a simple matter to check, by using the Weyl dimension formula, that $\dim {\textstyle\bigwedge}^3(\rho_1)
= \dim {\textstyle\bigwedge}^3(\sigma_{\eta_2})= \binom{N_1}{3}$ does not coincide with the dimension of any of the irreducible terms above.
This forces that ${\textstyle\bigwedge}^3(\rho_1)$ is not irreducible. 

We now see the case $i=2$, which is very similar, so we omit some details. 
We have that ${\textstyle\bigwedge}^3(\rho_2)= {\textstyle\bigwedge}^3(\sigma_{2\eta_1})$ is an $\fh_\C$-submodule of 
\begin{equation*}
\sigma_{2\eta_1}\otimes \sigma_{2\eta_1}\otimes \sigma_{2\eta_1}
\simeq 
\sigma_{6\eta_1} \oplus 2\sigma_{4\eta_1+\eta_2} \oplus 3\sigma_{2\eta_1+2\eta_2} \oplus \sigma_{3\eta_1+\eta_3} \oplus \sigma_{3\eta_2} \oplus 2\sigma_{\eta_1+\eta_2+\eta_3} \oplus \sigma_{2\eta_3}, 
\end{equation*}
and $\dim {\textstyle\bigwedge}^3(\rho_2)
= \dim {\textstyle\bigwedge}^3(\sigma_{2\eta_1})= \binom{N_2}{3}$ does not coincide with the dimension of any of the irreducible terms above.
Hence, ${\textstyle\bigwedge}^3(\rho_1)$ is not irreducible.

\subsection{Family III}
Set $\fh=\su(p)\oplus\su(q)$, and $\rho=\sigma_{\eta_1} \widehat\otimes\, \widehat \sigma_{\eta_1'}'$. 
Here, $\sigma_{\eta_1}$ and $\sigma_{\eta_1'}'$ are the standard representations of the simple ideals $\su(p)$ and $\su(q)$ respectively. 
We have that $\rho$ is an irreducible complex representation of $\fh_\C$ of complex type, so its image in included in $\su(V_{\rho})\simeq \su(pq)$. 
Here $\Lambda_2=2\omega_1+2\omega_{pq-1}$. 

It follows from \cite[Prop 15.25]{FultonHarris-book} that 
$ 
\pi_{2\omega_1}\otimes\pi_{2\omega_{pq-1}}
	\simeq \pi_{2\omega_1+2\omega_{pq-1}}
	\oplus \pi_{\omega_1+\omega_{pq-1}}
	\oplus \pi_{0}.
$ 
In the proof of \cite[Thm.~3.20]{LauretRossiTolcachier-Saloff-Coste}, it was established that $V_{\pi_{\omega_1+\omega_{pq-1}}}^{\fh_\C} =0$. 
Thus
$ 
\dim V_{2\omega_1+2\omega_{pq-1}}^{\fh_\C} 
= \dim V_{\pi_{2\omega_1}\otimes\pi_{2\omega_{pq-1}}} ^{\fh_\C}-1
. 
$ 
Using Remark~\ref{rem:ExtSym-standard} (in the second identity below) we obtain that
\begin{align*}
\pi_{2\omega_1}\otimes\pi_{2\omega_{pq-1}}|_{\fh_\C} & 
= \pi_{2\omega_1}|_{\fh_\C} \otimes\pi_{2\omega_{pq-1}}|_{\fh_\C}
= {\Sym^2}(\pi_{\omega_1})|_{\fh_\C} \otimes {\Sym^2}(\pi_{\omega_{pq-1}})|_{\fh_\C}
\\ & 
= {\Sym^2}(\pi_{\omega_1}|_{\fh_\C}) \otimes {\Sym^2}(\pi_{\omega_{pq-1}}|_{\fh_\C})
\simeq {\Sym}^2(\rho) \otimes {\Sym}^2(\rho)^* 
.
\end{align*}
Similarly as in the previous case, it suffices to show that ${\Sym}^2(\rho)$ is not $\fh_\C$-irreducible.
By Remarks~\ref{rem:ExtSym-standard} and \ref{rem:ExtSym(suma)o(tensor)}, we get
\begin{align*}
{\Sym}^2(\rho) &
= 
{\Sym}^2(\sigma_{\eta_1}\widehat\otimes \,\, \sigma_{\eta_1'}')
\simeq 
{\Sym}^2(\sigma_{\eta_1})\widehat\otimes \Sym^2(\sigma_{\eta_1'}')
	\, \oplus \,  {\textstyle\bigwedge}^2(\sigma_{\eta_1})\widehat\otimes {\textstyle\bigwedge}^2(\sigma_{\eta_1'}')
\\ 	&
\simeq 
\sigma_{2\eta_1}  \widehat\otimes \,\,  \sigma_{2\eta_{1}'}'
\, \oplus \, 
\sigma_{\eta_2}\widehat\otimes \,\,  \sigma_{\eta_{2}'}'
\end{align*}
which is clearly not irreducible. 

\subsection{Family IV}
It was established in the proof of \cite[Thm.~3.11]{LauretRossiTolcachier-Saloff-Coste} that $\dim V_{4\omega_1}^{\fh_\C}=1$. 

\subsection{Family V}
Set
$\fh=\su(n)$ and $\fg=\so(n^2-1)$. 
The embedding of $\fh_\C$ into $\fg_\C$ is given by the adjoint representation of $\fh_\C$, which is of real type, $\ad:\fh_\C\to \fg_\C\subset \gl(\fh_\C)$. 

We have that $\dim V_{\pi_{\omega_7}}^{\fh_\C}>0$ follows from the procedure using the Poincaré polynomial explained in the proof of \cite[Thm.~3.1]{LauretRossiTolcachier-Saloff-Coste}.

\subsection{Family VI} 
Set $\fh=\spp(n)$ and $\fg=\so(2n^2-n-1)$.
The embedding is given by the irreducible representation $\sigma_{\eta_2}$ of $\fh_\C$ of real type, i.e.\  $\rho =\sigma_{\eta_2}: \fh_\C\to \fg_\C\subset\gl(V_{\sigma_{\eta_2}})$.
Here $\Lambda_2=2\omega_2$.

It is well known that $\Sym^2(\pi_{\omega_1}) \simeq\pi_{2\omega_1}\oplus\pi_0$ and ${\textstyle\bigwedge^2}(\pi_{\omega_1}) \simeq\pi_{\omega_2}$ (see Remark~\ref{rem:ExtSym-standard}). 
Restricting them to $\fh_\C$ we have that
\begin{align*}
\pi_{2\omega_1}|_{\fh_\C}\oplus\sigma_0&
	\simeq {\Sym^2}(\pi_{\omega_1}|_{\fh_\C})  
	= {\Sym}^2(\rho) 
	= {\Sym}^2(\sigma_{\eta_2}) 
	\simeq \sigma_0\oplus \sigma_{\eta_{2}}\oplus\sigma_{2\eta_2}\oplus \sigma_{\eta_{4}},
\\
\pi_{\omega_2}|_{\fh_\C}&
	\simeq {\textstyle\bigwedge^2}(\pi_{\omega_1}|_{\fh_\C}) 
	= {\textstyle\bigwedge}^2(\rho) 
	= {\textstyle\bigwedge}^2(\sigma_{\eta_2})
	\simeq \sigma_{2\eta_1}\oplus \sigma_{\eta_1+\eta_3}
.
\end{align*}
The last two steps in each row were established in \cite[Lem.~3.5]{LauretRossiTolcachier-Saloff-Coste}. 
It follows immediately from Remark~\ref{rem:rhoxrho^*} that $\dim V_{\pi_{\omega_2}\otimes\pi_{\omega_2}}^{\fh_\C}=2$ (because $\sigma_{2\eta_1}$ and $\sigma_{\eta_1+\eta_3}$ are self-dual)
and 
$\dim V_{\pi_{2\omega_1}\otimes\pi_{\omega_2}}^{\fh_\C}=0$.

On the other hand, we have by Remark~\ref{rem:KoikeTerada} that 
\begin{equation}\label{eq-prueba:so(N)_omega2xomega2_2omega1xomega2}
\begin{aligned}
\pi_{2\omega_1}\otimes\pi_{\omega_2} &
	\simeq \pi_{2\omega_1+\omega_2}\oplus \pi_{\omega_1+\omega_3} \oplus \pi_{2\omega_1}\oplus \pi_{\omega_2}
, \\
\pi_{\omega_2}\otimes\pi_{\omega_2}&
	\simeq \pi_{\omega_4} \oplus \pi_{2\omega_2} \oplus \pi_{\omega_1+\omega_3} \oplus \pi_{2\omega_1} \oplus \pi_{\omega_2} \oplus \pi_0
.
\end{aligned}
\end{equation}
The first row implies 
$ 
0
	= \dim V_{\pi_{2\omega_1}\otimes\pi_{\omega_2}}^{\fh_\C} 
	= \dim V_{\pi_{2\omega_1+\omega_2}}^{\fh_\C}
	+ \dim V_{\pi_{\omega_1+\omega_3}}^{\fh_\C}
	+ \dim V_{\pi_{2\omega_1}}^{\fh_\C}
	+ \dim V_{\pi_{\omega_2}}^{\fh_\C} 
,
$ 
so all the terms at the right vanish. 
The second row gives 
$
2=
\dim V_{\pi_{\omega_2}\otimes\pi_{\omega_2}}^{\fh_\C}
	= \dim V_{\pi_{\omega_4}}^{\fh_\C} 
	+ \dim V_{\pi_{2\omega_2}}^{\fh_\C} 
	+ 0
	+ 0
	+ 0
	+ 1
, 
$
concluding that 
\begin{equation}\label{eq:dim+dim=1}
\dim V_{\pi_{2\omega_2}}^H+\dim V_{\pi_{\omega_4}}^H=1.
\end{equation}
This ensures that $V_{\pi_{\Lambda_2}}^{\fh_\C} \neq 0$ for some $\Lambda_2 \in \{2\omega_2,\omega_4\}$.

%The next goal is to show that $\dim V_{\pi_{2\omega_2}}^{\fh_\C}=1$.

\begin{claim}
$\displaystyle \pi_{\omega_3}|_{\fh_\C}= \sigma_{2\eta_1} \oplus \sigma_{\eta_1+\eta_3} \oplus \sigma_{2\eta_3} \oplus \sigma_{2\eta_1+\eta_2} \oplus \sigma_{2\eta_1+\eta_4}. $
\end{claim}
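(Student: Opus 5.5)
The plan is to identify $\pi_{\omega_3}|_{\fh_\C}$ with $\bigwedge^3(\rho)=\bigwedge^3(\sigma_{\eta_2})$ and then compute this exterior cube through $\GL(2n,\C)$-plethysm followed by branching to $\Sp(2n,\C)$. First, $N=2n^2-n-1\geq 14$ for $n\geq3$, so Remark~\ref{rem:ExtSym-standard} gives $\pi_{\omega_3}\simeq\bigwedge^3(\pi_{\omega_1})$. Hence
\[
\pi_{\omega_3}|_{\fh_\C}\simeq \textstyle\bigwedge^3(\pi_{\omega_1}|_{\fh_\C})=\bigwedge^3(\sigma_{\eta_2}),
\]
and it suffices to decompose $\bigwedge^3(\sigma_{\eta_2})$ as an $\spp(n,\C)$-module.

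The key device is to pass through $\gl(2n,\C)$. Let $W=\C^{2n}$. As $\spp(n,\C)$-modules we have $\bigwedge^2W\simeq\sigma_{\eta_2}\oplus\sigma_0$, by Remark~\ref{rem:ExtSym-standard} (type $C_n$, $p=2$). Therefore
\[
\textstyle\bigwedge^3(\bigwedge^2W)\simeq\bigwedge^3(\sigma_{\eta_2})\oplus\bigwedge^2(\sigma_{\eta_2}).
\]
By \cite[Lem.~3.5]{LauretRossiTolcachier-Saloff-Coste}, $\bigwedge^2(\sigma_{\eta_2})\simeq\sigma_{2\eta_1}\oplus\sigma_{\eta_1+\eta_3}$, so these two terms are subtracted at the end. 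On the $\gl(2n)$ side I will use the classical plethysm
\[
\textstyle\bigwedge^3(\bigwedge^2W)\simeq S_{(3,1,1,1)}W\oplus S_{(2,2,2)}W .
\]
It can be checked on $\dim W=4$, where $10+10=20=\binom{6}{3}$, or derived from the character of $\bigwedge^3$ via Schur functions.

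Next I restrict each Schur functor to $\spp(n,\C)$ using Littlewood's branching rule. It says $S_\lambda W|_{\spp}=\bigoplus_\mu\big(\sum_\delta c^\lambda_{\delta\mu}\big)\sigma_\mu$, where $\delta$ runs over partitions all of whose columns have even length. The relevant $\delta$ are $\emptyset$, $(1,1)$, $(2,2)$ and $(1,1,1,1)$. A partition $\mu=(\mu_1,\mu_2,\dots)$ is identified with the $C_n$ weight $\sum(\mu_i-\mu_{i+1})\eta_i$. Computing the Littlewood--Richardson coefficients for $\lambda=(3,1,1,1)$ and $\lambda=(2,2,2)$ is a finite skew-tableau count. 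Subtracting $\sigma_{2\eta_1}\oplus\sigma_{\eta_1+\eta_3}$ should leave exactly the five summands in the Claim.

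As an independent safeguard I will check dimensions with the Weyl dimension formula. The sum of the dimensions of the five claimed summands must equal $\binom{N}{3}$ with $N=2n^2-n-1$, as an identity of polynomials in $n$; checking it at enough values of $n$ is enough.

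The main obstacle is the branching step. Littlewood's rule is only valid in the stable range $\ell(\lambda)\leq n$. Here $\ell(3,1,1,1)=4$ exceeds $n$ when $n=3$, so King's modification rules must be applied in that case, and they may cancel or alter some terms; for instance $2\eta_1+\eta_4$ makes no sense for $n=3$. The LR bookkeeping is also easy to get wrong, so naive counts must be reconciled with the dimension identity above. I would first carry out the computation for generic $n\geq4$, confirm it by the dimension count, and then treat $n=3$ separately. For $n=3$ I would either use the modification rules or compute the weight multiset of $\bigwedge^3(\sigma_{\eta_2})$ directly from the $14$ weights $\pm\ee_i\pm\ee_j$ and $0$ (twice) of $\sigma_{\eta_2}$, peeling off highest weights, reading any $\eta_4$ as $0$.
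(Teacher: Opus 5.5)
Your route is correct, but it differs from the paper's. The paper never decomposes $\bigwedge^3(\sigma_{\eta_2})$ directly. It restricts $\pi_{\omega_2}\otimes\pi_{\omega_1}\simeq\pi_{\omega_3}\oplus\pi_{\omega_1+\omega_2}\oplus\pi_{\omega_1}$ and $\pi_{2\omega_1}\otimes\pi_{\omega_1}\simeq\pi_{3\omega_1}\oplus\pi_{\omega_1+\omega_2}\oplus\pi_{\omega_1}$ to $\fh_\C$. The summands that occur more often in the first product ($\sigma_{2\eta_1}$, $\sigma_{2\eta_3}$, $\sigma_{2\eta_1+\eta_2}$, $\sigma_{2\eta_1+\eta_4}$) must then lie in $\pi_{\omega_3}|_{\fh_\C}$. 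Finally, the leftover dimension $\tfrac12(n-2)(n+1)(2n-1)(2n+1)$ is filled by $\sigma_{\eta_1+\eta_3}$, on the assertion that no other nonnegative combination of the remaining dimensions works.

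Your computation does go through. For $n\geq4$ the skew counts give
\[
S_{(3,1,1,1)}W|_{\fh_\C}\simeq\sigma_{(3,1,1,1)}\oplus\sigma_{(3,1)}\oplus\sigma_{(2,1,1)}\oplus\sigma_{(2)},
\]
coming from $\delta=\emptyset,(1,1),(1,1,1,1)$, and
\[
S_{(2,2,2)}W|_{\fh_\C}\simeq\sigma_{(2,2,2)}\oplus\sigma_{(2,1,1)}\oplus\sigma_{(2)},
\]
coming from $\delta=\emptyset,(1,1),(2,2)$. Removing $\sigma_{(2)}\oplus\sigma_{(2,1,1)}$ leaves exactly the five summands of the Claim, since $(3,1,1,1)$, $(3,1)$, $(2,2,2)$, $(2,1,1)$, $(2)$ correspond to $2\eta_1+\eta_4$, $2\eta_1+\eta_2$, $2\eta_3$, $\eta_1+\eta_3$, $2\eta_1$.

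What you gain is exact multiplicities, uniformly in $n$, with no need to justify for every $n$ that a dimension decomposes in only one way. What you pay is importing the plethysm $\bigwedge^3\circ\bigwedge^2$ and Littlewood--King branching. The paper, by contrast, uses only tensor product rules and restrictions it already has available.

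One correction for $n=3$: do not read $\eta_4$ as $0$. That would turn $\sigma_{2\eta_1+\eta_4}$ into a second copy of $\sigma_{2\eta_1}$. But $\binom{14}{3}=364=21+70+189+84$ is already exhausted by $\sigma_{2\eta_1}$, $\sigma_{\eta_1+\eta_3}$, $\sigma_{2\eta_1+\eta_2}$, $\sigma_{2\eta_3}$. The summand must simply be dropped, and this is what King's rule gives. For $\Sp(6)$ and a label of length $4$, the boundary strip has length $h=2\cdot4-2\cdot3-2=0$, so $\langle3,1,1,1\rangle=0$. Consistently, the Weyl dimension polynomial of $\sigma_{2\eta_1+\eta_4}$ vanishes at $n=3$.
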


\begin{proof}
\renewcommand{\qedsymbol}{$\blacksquare$}
By \eqref{eq-prueba:so(N)_omega2xomega2_2omega1xomega2}, we have that
	\begin{align*}
		\big(\pi_{\omega_3} \oplus \pi_{\omega_1+\omega_2} \oplus \pi_{\omega_1} \big) |_{\fh_\C}&
		\simeq \big( \pi_{\omega_2}\otimes\pi_{\omega_{1}}\big) |_{\fh_\C}
		= \pi_{\omega_2}|_{\fh_\C} \otimes \pi_{\omega_{1}}|_{\fh_\C}
		\\ & 
		\simeq \big(\sigma_{\eta_1+\eta_3} \oplus \sigma_{2\eta_1}\big) \otimes  \sigma_{\eta_2}
		\\& 
		\simeq  
		2\sigma_{\eta_2} \oplus \sigma_{\eta_4} \oplus 2\sigma_{2\eta_1} \oplus 3\sigma_{\eta_1+\eta_3} \oplus \sigma_{\eta_1+\eta_5}
		\oplus \sigma_{2\eta_2} \oplus \sigma_{\eta_2+\eta_4}
		\\
		&\quad \oplus \sigma_{2\eta_3} \oplus 2\sigma_{2\eta_1+\eta_2}
		\oplus \sigma_{\eta_1+\eta_2+\eta_3} \oplus \sigma_{2\eta_1+\eta_4},
		\\
		\big(\pi_{3\omega_1} \oplus \pi_{\omega_1+\omega_2}
		\oplus \pi_{\omega_1} \big)|_{\fh_\C} &
		=\big(\pi_{2\omega_1}\otimes\pi_{\omega_{1}}\big)|_{\fh_\C}
		=\pi_{2\omega_1}|_{\fh_\C} \otimes \pi_{\omega_{1}}|_{\fh_\C}
		\\ & 
		\simeq \big(\sigma_{\eta_4} \oplus \sigma_{2\eta_2} \oplus \sigma_{\eta_2}\big) \otimes  \sigma_{\eta_2}
		\\ & 
		\simeq  
		3\sigma_{\eta_2} \oplus 2\sigma_{\eta_4} \oplus \sigma_{2\eta_1}
		\oplus \sigma_{\eta_6}
		\oplus 3\sigma_{\eta_1+\eta_3} 
		\oplus \sigma_{\eta_1+\eta_5} 
		\oplus 2\sigma_{2\eta_2} \oplus 2\sigma_{\eta_2+\eta_4}
		\\
		&\quad \oplus \sigma_{3\eta_2} \oplus \sigma_{2\eta_1+\eta_2}
		\oplus \sigma_{\eta_1+\eta_2+\eta_3} \oplus \sigma_{0}.
	\end{align*}

One can check that $\sigma_{2\eta_1} \oplus  \sigma_{2\eta_3} \oplus \sigma_{2\eta_1+\eta_2} \oplus \sigma_{2\eta_1+\eta_4} $ appears in the first formula but not in the second. 
Therefore, it is necessarily a summand in the decomposition of $\pi_{\omega_3}|_{\fh_\C}$.
Using the Weyl dimension formula, one gets
$$
\dim V_{\pi_{\omega_3}}- \dim(V_{\sigma_{2\eta_1}} \oplus  V_{\sigma_{2\eta_3}} \oplus V_{\sigma_{2\eta_1+\eta_2}} \oplus V_{\sigma_{2\eta_1+\eta_4}})= \tfrac{1}{2}\, (n-2)(n+1)(2n-1)(2n+1).
$$
It remains to decide how to write this number as a non-negative combination of the dimensions of rest of the irreducible constituent of $\big(\pi_{\omega_3} \oplus \pi_{\omega_1+\omega_2} \oplus \pi_{\omega_1} \big) |_{\fh_\C}$, namely
\begin{align*}
\dim V_{\sigma_{\eta_2}} &= (n-1)(2n+1), &
\dim V_{\sigma_{\eta_4}} &= \tfrac{n(n-3)(4n^2-1)}{6}, \\
\dim V_{\sigma_{2\eta_1}} &= n(2n+1), &
\dim V_{\sigma_{\eta_1+\eta_3}} &= \tfrac{(n-2)(n+1)(4n^2-1)}{2}, \\
\dim V_{\sigma_{\eta_1+\eta_5}} &= \tfrac{n(n-4)(n+1)(2n-3)(4n^2-1)}{18}, &		
\dim V_{\sigma_{2\eta_2}} &= \tfrac{n(n-1)(2n-1)(2n+3)}{3}, \\
\dim V_{\sigma_{\eta_2+\eta_4}} &= \tfrac{n^2(n-3)(4n^2-9)(2n+1)}{10}, &
%\dim V_{\sigma_{2\eta_3}} &= \tfrac{(n-2)(n^2-1)(4n^2-9)(2n+1)}{18}\, , &
\dim V_{\sigma_{2\eta_1+\eta_2}} &= \tfrac{n(n-1)(2n+1)(2n+3)}{2}, \\
\dim V_{\sigma_{\eta_1+\eta_2+\eta_3}} &= \tfrac{64n^2(n^2-4)(n^2-1)}{45}. &
%\dim V_{\sigma_{2\eta_1+\eta_4}} &= \tfrac{(n-3)(n^2-1)(4n^3-1)(2n+3)}{9}. & &
\end{align*}
One can check that the only way to do that is $\dim V_{\sigma_{\eta_1+\eta_3}}$, and the claim follows. 
\end{proof}

Since $\pi_{\omega_3}|_{\fh_\C}$ does not contain 
$\sigma_{\eta_2}\simeq \sigma_{\eta_2}^*$, 
Remark~\ref{rem:rhoxrho^*} yields $\sigma_0$ does not appear in $\pi_{\omega_3}\otimes\pi_{\omega_1}|_{\fh_\C}$. 
Since $\pi_{\omega_3}\otimes\pi_{\omega_1}\simeq \pi_{\omega_4}\oplus\pi_{\omega_3+\omega_1}\oplus \pi_{\omega_2} $ by Remark~\ref{rem:KoikeTerada}, we conclude that $\dim V_{\pi_{\omega_4}}^{\fh_\C}=0$. 
We conclude that $\dim V_{\pi_{2\omega_2}}^{\fh_\C}=1$ by \eqref{eq:dim+dim=1}, as claimed.

\subsection{Family VII}
Since $\ad:\fh_\C\to\fg_\C\subset\gl(\fh_\C)$ is the embedding, $V_{\pi_{\omega_7}}^{\fh_\C}\neq0$ as in Family V. 

\subsection{Family VIII}
It was established in the proof of \cite[Thm.~3.14]{LauretRossiTolcachier-Saloff-Coste} that $\dim V_{2\omega_2}^{\fh_\C}=1$. 

\subsection{Family IX}
Since $\ad:\fh_\C\to\fg_\C\subset\gl(\fh_\C)$ is the embedding, $V_{\pi_{\omega_7}}^{\fh_\C}\neq0$ as in Family V.

\subsection{Family X}
Set 
$\fh=\so(n)$, $\fg=\so(\frac{(n-1)(n+2)}{2})$,  $\rho=\sigma_{2\eta_1}:\fh_\C\to\fg_\C\subset\gl(V_{\sigma_{2\eta_1}})$, $\Lambda_2=2\omega_2$. 
This case is very similar to Family VI, so we will omit several details.  

The same argument as in Family VI and \cite[Lem.~3.8]{LauretRossiTolcachier-Saloff-Coste} gives
\begin{align*}
\pi_{2\omega_1}|_{\fh_\C}\oplus\sigma_0&
	\simeq {\Sym^2}(\pi_{\omega_1}|_{\fh_\C})  
	= {\Sym}^2(\rho) 
	= {\Sym}^2(\sigma_{2\eta_1}) 
	\simeq \sigma_0\oplus\sigma_{2\eta_1}\oplus \sigma_{4\eta_{1}}\oplus \sigma_{2\eta_{2}},
\\
\pi_{\omega_2}|_{\fh_\C}&
	\simeq {\textstyle\bigwedge^2}(\pi_{\omega_1}|_{\fh_\C}) 
	= {\textstyle\bigwedge}^2(\rho) 
	= {\textstyle\bigwedge}^2(\sigma_{2\eta_1})
	\simeq \sigma_{\eta_2}\oplus \sigma_{2\eta_1+\eta_2}
.
\end{align*}
Remark~\ref{rem:rhoxrho^*} gives 
$\dim V_{\pi_{\omega_2}\otimes\pi_{\omega_2}}^{\fh_\C}=2$ 
and 
$\dim V_{\pi_{2\omega_1}\otimes \pi_{\omega_2}}^{\fh_\C}=0$. 
Now, \eqref{eq-prueba:so(N)_omega2xomega2_2omega1xomega2} gives 
\begin{equation*}
\begin{aligned}
0
=\dim V_{\pi_{2\omega_1}\otimes\pi_{\omega_2}}^{\fh_\C} &
	= \dim V_{\pi_{2\omega_1+\omega_2}}^{\fh_\C}
	+ \dim V_{\pi_{\omega_1+\omega_3}}^{\fh_\C} 
	+ \dim V_{\pi_{2\omega_1}}^{\fh_\C}
	+ \dim V_{\pi_{\omega_2}}^{\fh_\C}
, \\
2
= \dim V_{\pi_{\omega_2}\otimes\pi_{\omega_2}}^{\fh_\C}&
	= \dim V_{\pi_{\omega_4}}^{\fh_\C} 
	+ \dim V_{\pi_{2\omega_2}}^{\fh_\C} 
	+ \dim V_{\pi_{\omega_1+\omega_3}}^{\fh_\C} 
	+ \dim V_{\pi_{2\omega_1}}^{\fh_\C} 
	+ \dim V_{\pi_{\omega_2}}^{\fh_\C} 
	+ \dim V_{\pi_0}^{\fh_\C}
\\ &
	= \dim V_{\pi_{\omega_4}}^{\fh_\C} 
	+ \dim V_{\pi_{2\omega_2}}^{\fh_\C} 
	+ 0
	+ 0
	+ 0
	+ 1
.
\end{aligned}
\end{equation*}
We conclude that $\dim V_{\pi_{\omega_4}}^{\fh_\C} 	+ \dim V_{\pi_{2\omega_2}}^{\fh_\C} =1$. 

One can prove that $\dim V_{\pi_{\omega_4}}^{\fh_\C} =0$, and therefore $\dim V_{\pi_{2\omega_2}}^{\fh_\C} =1$, using the same tedious argument as in Family VI.

\subsection{Isolated cases No.~1, 5--7, 10--12, and 19--32}
These cases can be easily done computing restriction of irreducible representations using \cite{Sage}. 
For instance, the following code decomposes  $\pi_{2\omega_4}|_{\fh_\C}$ in $\fh_\C$-irreducible terms for the Isolated  case No.~1, obtaining that $\sigma_0=1_{\fh_\C}$ occurs exactly once:

\medskip 

\begin{lstlisting}
	sage: G=WeylCharacterRing(['A',15], style='coroots')
	sage: H=WeylCharacterRing(['D',5], style='coroots')
	sage: b=branching_rule(G,"D5(1/2, 1/2, 1/2, 1/2, -1/2)", rule="plethysm")
	sage: G(0,0,0,2).branch(H,rule=b)
	 D5(0,0,0,0,0) + D5(0,0,0,1,1) + D5(2,0,0,0,0) + D5(1,0,0,2,0) 
	 + D5(1,0,0,0,2) + D5(0,2,0,0,0) + D5(0,1,0,1,1) + 2*D5(0,0,2,0,0) 
	 + D5(0,0,0,2,2) + D5(2,0,0,1,1) + D5(1,1,1,0,0) + D5(1,1,0,0,2) 
	 + D5(1,0,1,1,1) + D5(0,2,0,1,1) + D5(0,1,1,0,2) + D5(4,0,0,0,0) 
	 + D5(3,0,0,2,0) + D5(2,0,2,0,0) + D5(2,0,0,0,4) + D5(1,2,0,0,2) 
	 + D5(0,4,0,0,0)
\end{lstlisting}

\medskip 

Moreover, some of them appears in \cite{LieART}. 
For instance, for the Isolated case No.~5, the line $\textbf{30} = \textbf{1} +  \textbf{7} +  \textbf{9} +  \textbf{13}$ in \cite[p.~330]{LieART} (inside the table titled ``$\Sp(4)\to\SU(2)$ (S)'') yields $\pi_{3\omega_2}|_{\fh_\C}\simeq \sigma_{0} \oplus\sigma_{6\ee_1} \oplus\sigma_{8\ee_1} \oplus\sigma_{12\ee_1}$, obtaining that $\dim V_{\pi_{\Lambda_2}}^{\fh_\C}=1$.

Unfortunately, the computations of the restriction of irreducible representations using \cite{Sage} is not (currently) feasible for the rest of the isolated cases due to the high rank of $\fg_\C$.

\subsection{Isolated case No.~2}
Set
$\fh=\fe_6$, $\fg=\su(27)$,  $\rho=\sigma_{\eta_1}:\fh_\C\to\fg_\C\subset\gl(V_{\sigma_{\eta_1}})$, $\Lambda_2=2\omega_3$. 

Decomposing the tensor products using \cite[Prop 15.25]{FultonHarris-book}, one can check that
$
\pi_{\omega_3}\otimes\pi_{\omega_{3}} 
	\simeq \pi_{\omega_2}\otimes\pi_{\omega_{4}}
	\oplus \pi_{2\omega_3}
$, so 
$
\dim V_{\pi_{2\omega_3}}^{\fh_\C} 
= \dim V_{\pi_{\omega_3}\otimes\pi_{\omega_{3}}}^{\fh_\C} - \dim V_{\pi_{\omega_2}\otimes\pi_{\omega_{4}}}^{\fh_\C}
$.
Using Remark~\ref{rem:ExtSym-standard}, we get that
\begin{align*}
\pi_{\omega_2}\otimes\pi_{\omega_{4}}|_{\fh_\C}&
	= {\textstyle\bigwedge^2}(\pi_{\omega_1}|_{\fh_\C}) \otimes {\textstyle\bigwedge^4}(\pi_{\omega_{1}}|_{\fh_\C})  
	\simeq {\textstyle\bigwedge}^2(\sigma_{\eta_1}) \otimes {\textstyle\bigwedge^4}(\sigma_{\eta_{1}})
	\simeq \sigma_{\eta_3}\otimes \sigma_{\eta_2+\eta_5}, 
\\
\pi_{\omega_3}\otimes\pi_{\omega_{3}}|_{\fh_\C}&
	= {\textstyle\bigwedge^3}(\pi_{\omega_1}|_{\fh_\C}) \otimes {\textstyle\bigwedge^3}(\pi_{\omega_{1}}|_{\fh_\C})  
	\simeq {\textstyle\bigwedge}^3(\sigma_{\eta_1}) \otimes {\textstyle\bigwedge^3}(\sigma_{\eta_{1}})
	\simeq \sigma_{\eta_4}\otimes \sigma_{\eta_4}.
\end{align*}

The exterior products above can be computed for instance using \cite{Sage}. 
The following code returns all of them:
\medskip 

\begin{lstlisting}
	sage: H=WeylCharacterRing(['E',6], style='coroots')
	sage: H(1,0).exterior_power(2)
	 E6(0,0,1,0,0,0)	
	sage: H(1,0).exterior_power(3)
	 E6(0,0,0,1,0,0)
	sage: H(1,0).exterior_power(4)
	 E6(0,1,0,0,1,0)
\end{lstlisting}

\medskip

Now, since $\sigma_{\eta_3}^*\not\simeq\sigma_{\eta_2+\eta_5}$ and $\sigma_{\eta_4}^*\simeq\sigma_{\eta_4}$, 
we deduce that 
$\dim V_{\pi_{\omega_2}\otimes\pi_{\omega_{4}}}^{\fh_\C}= 0$ 
and
$\dim V_{\pi_{\omega_3}\otimes\pi_{\omega_{3}}}^{\fh_\C}= 1$
by Remark~\ref{rem:rhoxrho^*}, respectively. 
We conclude that $\dim V_{\pi_{2\omega_3}}^{\fh_\C}=1$.

\subsection{Isolated cases No.~4, 14, 16, and 18}
Since $\ad:\fh_\C\to\fg_\C\subset\gl(\fh_\C)$ is the embedding, one can use the same argument as in Family V.

\subsection{Isolated case No.~8}
Set $\fh=\so(12)$, $\fg=\spp(16)$,  $\rho=\sigma_{\eta_5}: \fh_\C\to\fg_\C\subset\gl(V_{\sigma_{\eta_5}})$, $\Lambda_2=2\omega_4$. 

On the one hand,  ${\textstyle\bigwedge^p}(\pi_{\omega_1})|_{\fh_\C}\simeq \pi_{\omega_p}|_{\fh_\C} 
\oplus {\textstyle\bigwedge^{p-2}}(\pi_{\omega_1})|_{\fh_\C}$ by Remark~\ref{rem:ExtSym-standard}, and one can check (e.g.\ using \cite{Sage}) that
\begin{align*}
{\textstyle\bigwedge^p}(\pi_{\omega_1})|_{\fh_\C}
\simeq {\textstyle\bigwedge^p}(\pi_{\omega_1}|_{\fh_\C})
= {\textstyle\bigwedge}^p(\sigma_{\eta_5}) 
\simeq
\begin{cases}
\sigma_{\eta_4}\oplus \sigma_0&p=2,\\
\sigma_{\eta_3+\eta_6}\oplus \sigma_{\eta_5}&p=3,\\
\sigma_{\eta_2+\eta_6}\oplus \sigma_{2\eta_3}\oplus \sigma_{\eta_4}\oplus \sigma_0&p=4,\\
\sigma_{\eta_2+\eta_3+\eta_6}\oplus \sigma_{\eta_1+3\eta_3}\oplus \sigma_{\eta_3+\eta_6}\oplus \sigma_{\eta_5}&p=5.
\end{cases}
\end{align*}
We thus obtain that 
$\pi_{\omega_5}|_{\fh_\C}=\sigma_{\eta_2+\eta_3+\eta_6}\oplus \sigma_{\eta_1+3\eta_3}$, 
$\pi_{\omega_4}|_{\fh_\C}=\sigma_{\eta_2+\eta_6}\oplus \sigma_{2\eta_3}$, and
$\pi_{\omega_3}|_{\fh_\C}=\sigma_{\eta_3+\eta_6}$.
Since every irreducible representation of $\fh_\C$ is self-dual, it follows from Remark~\ref{rem:rhoxrho^*} that 
\begin{align*}
\dim V_{\pi_{\omega_3}\otimes\pi_{\omega_3}}^{\fh_\C}&=1,& 
\dim V_{\pi_{\omega_4}\otimes\pi_{\omega_4}}^{\fh_\C}&=2,&
\dim V_{\pi_{\omega_3}\otimes\pi_{\omega_5}}^{\fh_\C}&=0.
\end{align*}

On the other hand, using Remark~\ref{rem:KoikeTerada}, one has that 
$
\pi_{\omega_3}\otimes\pi_{\omega_{3}}
\simeq  \pi_{2\omega_3} \oplus \pi_{2\omega_2} \oplus \pi_{2\omega_1} \oplus \pi_{0} \oplus \cdots
$. 
Taking into account that $V_{\pi_{\omega_3}\otimes\pi_{\omega_3}}^{\fh_\C}=1$, and $\pi_0$ occurs once in the decompositions of $\pi_{\omega_3}\otimes\pi_{\omega_3}$, it follows that $\sigma_0$ does not occur in the restriction to $\fh_\C$ of any other member of the decomposition, in particular
$\dim V_{\pi_{2\omega_3}}^{\fh_\C} =\dim V_{\pi_{2\omega_2}}^{\fh_\C} =\dim V_{\pi_{2\omega_1}}^{\fh_\C} =0$. 
Similarly, 
$
\pi_{\omega_4}\otimes\pi_{\omega_{4}}
\simeq (\pi_{\omega_5}\otimes\pi_{\omega_{3}}) \oplus \pi_{2\omega_4} \oplus \pi_{2\omega_3} \oplus \pi_{2\omega_2} \oplus \pi_{2\omega_1} \oplus \pi_{0}
$, thus
\begin{align*}
2
=\dim V_{\pi_{\omega_4}\otimes\pi_{\omega_{4}}}^{\fh_\C}
&
= \dim V_{\pi_{\omega_5}\otimes\pi_{\omega_{3}}}^{\fh_\C}
+ \dim V_{\pi_{2\omega_4}}^{\fh_\C} 
+ \dim V_{\pi_{2\omega_3}}^{\fh_\C} 
+ \dim V_{\pi_{2\omega_2}}^{\fh_\C} 
+ \dim V_{\pi_{2\omega_1}}^{\fh_\C} 
+ \dim V_{\pi_{0}}^{\fh_\C}
\\ & 
= 0
+ \dim V_{\pi_{2\omega_4}}^{\fh_\C} 
+ 0
+ 0
+ 0
+ 1.
\end{align*}
We conclude that 
$\dim V_{\pi_{2\omega_4}}^{\fh_\C} =1$, as asserted.

\subsection{Isolated case No.~9}
Set $\fh=\fe_7$, $\fg=\spp(28)$,  $\rho=\sigma_{\eta_7}:\fh_\C\to\fg_\C\subset\gl(V_{\sigma_{\eta_7}})$, $\Lambda_2=3\omega_2$.

It follows from Remark~\ref{rem:KoikeTerada} that 
\begin{align*}
	\pi_{\omega_2}\otimes\pi_{\omega_{4}} &
	\simeq \pi_{\omega_2} \oplus \pi_{\omega_4} \oplus \pi_{\omega_6}
	\oplus \pi_{\omega_1+\omega_3} \oplus \pi_{\omega_1+\omega_5} \oplus \pi_{\omega_2+\omega_4}, 
	\\
	\pi_{2\omega_1}\otimes\pi_{2\omega_{2}} &
	\simeq \pi_{2\omega_1+2\omega_2} 
	\oplus \pi_{\omega_1+\omega_2+\omega_3} \oplus \pi_{2\omega_1+\omega_2}
	\oplus \pi_{2\omega_3} \oplus \pi_{2\omega_2}
	\oplus \pi_{\omega_1+\omega_3}
	\oplus \pi_{2\omega_1},\\
	\pi_{2\omega_2}\otimes\pi_{\omega_{2}} &
	\simeq \pi_{3\omega_2} 
	\oplus \pi_{\omega_1+\omega_2+\omega_3} \oplus \pi_{2\omega_1+\omega_2}
	\oplus \pi_{\omega_2+\omega_4} \oplus \pi_{2\omega_2}
	\oplus \pi_{\omega_1+\omega_3}
	\oplus \pi_{\omega_2}.
\end{align*}
One can check (e.g.\ using \cite{Sage}) that 
$\pi_{\omega_2}|_{\fh_\C}\simeq\sigma_{\eta_6}$, $\pi_{\omega_4}|_{\fh_\C}\simeq\sigma_{\eta_4}$, 
$\pi_{2\omega_1}|_{\fh_\C}\simeq\sigma_{\eta_1}+\sigma_{2\eta_7}$, and
$\pi_{2\omega_2}|_{\fh_\C}\simeq\sigma_{2\eta_1}+\sigma_{\eta_6} +\sigma_{2\eta_6} +\sigma_{\eta_1+2\eta_7} +\sigma_{\eta_2+\eta_7}$. 
By Remark~\ref{rem:rhoxrho^*}, 
\begin{align*}
\dim V_{\pi_{2\omega_2}\otimes \pi_{\omega_2}}^{\fh_\C} &= 1,&
\dim V_{\pi_{\omega_2} \otimes \pi_{\omega_4}}^{\fh_\C} =
\dim V_{\pi_{2\omega_1} \otimes \pi_{2\omega_2}}^{\fh_\C}
&= 0.
\end{align*}
The identities at the right ensure that $\dim V_{\pi}^{\fh_\C}=0$ for every irreducible term $\pi$ occurring in $\pi_{\omega_2} \otimes \pi_{\omega_4}$ or $\pi_{2\omega_1} \otimes \pi_{2\omega_2}$ above. 
It follows that $1=\dim V_{\pi_{2\omega_2}\otimes \pi_{\omega_2}}^{\fh_\C} = \dim V_{\pi_{3\omega_2}}^{\fh_\C}$, as asserted. 

\subsection{Isolated case No.~13}
Set $\fh=\spp(4)$, $\fg=\so(42)$,  $\rho=\sigma_{\eta_4}:\fh_\C\to\fg_\C\subset\gl(V_{\sigma_{\eta_1}})$, $\Lambda_2=2\omega_1+\omega_4$.

By using Remark~\ref{rem:ExtSym-standard} and computing the corresponding symmetric and exterior power of $\sigma_{\eta_4}$ (e.g.\ using \cite{Sage}), one has that
\begin{align*}
(\pi_0 \oplus \pi_{2\omega_1}) \otimes \pi_{\omega_{4}}|_{\fh_\C}&
	\simeq  {\Sym^2}(\pi_{\omega_1}|_{\fh_\C}) \otimes {\textstyle\bigwedge^4}(\pi_{\omega_{1}}|_{\fh_\C})  
	= {\Sym^2}(\sigma_{\eta_4}) \otimes {\textstyle\bigwedge^4}(\sigma_{\eta_{4}}) \\ & 
	\simeq (\sigma_0 \oplus \sigma_{2\eta_2} \oplus \sigma_{2\eta_4})\otimes (\sigma_{\eta_2} \oplus \sigma_{2\eta_2} \oplus \sigma_{\eta_2+\eta_4} \oplus \cdots), 
\\
\pi_{\omega_1}\otimes\pi_{\omega_{3}}|_{\fh_\C}&
	\simeq\sigma_{\eta_4} \otimes {\textstyle\bigwedge^3}(\sigma_{\eta_{4}}) 
	\simeq \sigma_{\eta_4} \otimes (\sigma_{\eta_1+\eta_3} \oplus \sigma_{2\eta_1+\eta_4} \oplus \sigma_{\eta_2+2\eta_3}),
\\
\pi_{\omega_1}\otimes\pi_{\omega_{5}}|_{\fh_\C}&
	\simeq\sigma_{\eta_4} \otimes {\textstyle\bigwedge^5}(\sigma_{\eta_{4}})  
	\simeq \sigma_{\eta_4} \otimes (\sigma_{\eta_2} \oplus \sigma_{\eta_1+\eta_3} \oplus \sigma_{2\eta_2} \oplus \cdots)
. 
\end{align*}
Since all irreducible representations of $\fh_\C$ are self-dual, it follows from Remark~\ref{rem:rhoxrho^*} that 
\begin{align*}
\dim V_{(\pi_0 \oplus \pi_{2\omega_1}) \otimes \pi_{\omega_{4}}}^{\fh_\C}&=1,&
\dim V_{\pi_{\omega_1}\otimes\pi_{\omega_{3}}}^{\fh_\C}
= \dim V_{\pi_{\omega_1}\otimes\pi_{\omega_{5}}}^{\fh_\C}
&=0.
\end{align*}

On the other hand we have from Remark~\ref{rem:KoikeTerada} that 
\begin{equation}\label{eq-prueba:case13}
\begin{aligned}
	\pi_{2\omega_1}\otimes\pi_{\omega_{4}} &
	\simeq \pi_{\omega_4} 
	\oplus \pi_{\omega_1+\omega_3}
	\oplus \pi_{\omega_1+\omega_5}
	\oplus \pi_{2\omega_1+\omega_4}, \\
	\pi_{\omega_1}\otimes\pi_{\omega_{3}} &
	\simeq \pi_{\omega_2} \oplus \pi_{\omega_4}
	\oplus \pi_{\omega_1+\omega_3}, \\
	\pi_{\omega_1}\otimes\pi_{\omega_{5}} &
	\simeq \pi_{\omega_4} \oplus \pi_{\omega_6}
	\oplus \pi_{\omega_1+\omega_5}.
\end{aligned}
\end{equation}
Since $\sigma_0$ does not occur in the restriction to $\fh_\C$ in the second and third row, we deduce that $\dim V_{\pi}^{\fh_\C}=0$ for each irreducible constituent $\pi$. 
We conclude that 
\begin{align*}
1&
=
\dim V_{(\pi_0 \oplus \pi_{2\omega_1}) \otimes \pi_{\omega_{4}}}^{\fh_\C}
= 
\dim V_{\pi_{\omega_{4}}}^{\fh_\C}
+
\dim V_{\pi_{2\omega_1} \otimes \pi_{\omega_{4}}}^{\fh_\C}
\\ &
= 0
+ \dim V_{\pi_{\omega_{4}}}^{\fh_\C}
+ \dim V_{\pi_{\omega_{1}+\omega_3}}^{\fh_\C}
+ \dim V_{\pi_{\omega_{1}+\omega_5}}^{\fh_\C}
+ \dim V_{\pi_{2\omega_{1}+\omega_4}}^{\fh_\C}
= \dim V_{\pi_{2\omega_{1}+\omega_4}}^{\fh_\C}.
\end{align*}

\subsection{Isolated case No.~15}
Set $\fh=\su(8)$, $\fg=\so(70)$,  $\rho=\sigma_{\eta_4}:\fh_\C\to\fg_\C\subset \gl(V_{\sigma_{\eta_4}})$, $\Lambda_2=\omega_6$. 

One has that 
$
\pi_{\omega_6}|_{\fh_\C}
\simeq {\textstyle\bigwedge}^6(\pi_{\omega_1}|_{\fh_\C})
= {\textstyle\bigwedge}^6(\sigma_{\eta_4})
$.
One can easily check (e.g.\ using \cite{Sage}) that its decomposition as $\fh_\C$-irreducible modules is of the form $\sigma_{0} \oplus \sigma_{\eta_1+\eta_7}\oplus \dots$, so $\dim V_{\pi_{\omega_6}}^{\fh_\C}=1$.

\subsection{Isolated case No.~17}
Set $\fh=\so(16)$, $\fg=\so(128)$,  $\rho=\sigma_{\eta_7}:\fh_\C\to\fg_\C\subset\gl(V_{\sigma_{\eta_1}})$, $\Lambda_2=2\omega_1+\omega_4$.

This case is very similar to the Isolated case No.~13. We have that 
\begin{align*}
	(\pi_0 \oplus \pi_{2\omega_1}) \otimes \pi_{\omega_{4}}|_{\fh_\C}&
	= {\Sym^2}(\sigma_{\eta_7}) \otimes {\textstyle\bigwedge^4}(\sigma_{\eta_{7}}) 
	\simeq (\sigma_0 \oplus \sigma_{\eta_4} \oplus \sigma_{2\eta_7})\otimes (\sigma_{\eta_4} \oplus \sigma_{2\eta_8} \oplus \sigma_{2\eta_1} \oplus \cdots), 
	\\
	\pi_{\omega_1}\otimes\pi_{\omega_{3}}|_{\fh_\C}&
	\simeq\sigma_{\eta_7} \otimes {\textstyle\bigwedge^3}(\sigma_{\eta_{7}}) 
	\simeq \sigma_{\eta_7} \otimes (\sigma_{\eta_2+\eta_7} \oplus \sigma_{\eta_1+\eta_8} \oplus \sigma_{\eta_5+\eta_8}),
	\\
	\pi_{\omega_1}\otimes\pi_{\omega_{5}}|_{\fh_\C}&
	\simeq\sigma_{\eta_7} \otimes {\textstyle\bigwedge^5}(\sigma_{\eta_{7}})  
	\simeq \sigma_{\eta_7} \otimes (\sigma_{\eta_1+\eta_8} \oplus \sigma_{\eta_2+\eta_7} \oplus \sigma_{\eta_3+\eta_8} \oplus \cdots).
\end{align*}
Since all irreducible representations of $\fh_\C$ are self-dual, it follows from Remark~\ref{rem:rhoxrho^*} that 
\begin{align*}
	\dim V_{(\pi_0 \oplus \pi_{2\omega_1}) \otimes \pi_{\omega_{4}}}^{\fh_\C}&=1,&
	\dim V_{\pi_{\omega_1}\otimes\pi_{\omega_{3}}}^{\fh_\C}
	= \dim V_{\pi_{\omega_1}\otimes\pi_{\omega_{5}}}^{\fh_\C}
	&=0.
\end{align*}

The decompositions in \eqref{eq-prueba:case13} still hold in this case, so the rest of the proof is the same, and $\dim V_{\pi_{2\omega_{1}+\omega_4}}^{\fh_\C}=1$.

\smallskip

The proof of Theorem~\ref{thm:Lambda_2} is now complete.

\section{Proof of the main theorem}\label{sec:isotropyirredspaces}

In this section we prove Theorem~\ref{thm1:main}, which will be a direct consequence of Theorem~\ref{thm:standard-fails}.

The following Lie theoretical characterization was proved by Petrecca and Röser~\cite[Prop.~1.6]{PetreccaRoeser18}. 
This result is an extension to the homogeneous case of Schueth's study \cite{Schueth17} done for the compact Lie group case (i.e.\ $H=\{e\}$).

\begin{proposition}\label{prop:PetreccaRoser}
For an arbitrary compact homogeneous Riemannian manifold $(G/H,g)$, all the eigenspaces of its Laplace-Beltrami operator $\Delta$ are irreducible as $G$-modules if and only if the following three conditions are satisfied:
\begin{enumA}
\item \label{item(a)}
For $\pi_1,\pi_2\in\widehat G_H$, with $\pi_1\not\simeq\pi_2$ and $\pi_1\not\simeq\pi_2^*$, the linear maps $d\pi_1(C_g)|_{V_\pi^H}$ and $d\pi_2(C_g)|_{V_\pi^H}$ do not share any eigenvalue. 

\item Every eigenvalue of $d\pi(C_g)|_{V_\pi^H}$ has multiplicity one, for any $\pi\in\widehat G_H$ of real or complex type. 

\item Every eigenvalue of $d\pi(C_g)|_{V_\pi^H}$ has multiplicity two, for any $\pi\in\widehat G_H$ of quaternionic type. 
\end{enumA}
Here, $d\pi(C_g)=\sum_{j} d\pi(X_j)^2:V_\pi\to V_\pi$, where $\{X_j\}$ is any orthonormal basis of $(\fp,\innerdots_g)$, for any $\pi\in\widehat G$. 
\end{proposition}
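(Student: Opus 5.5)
The plan is to reduce the statement to a description of the complexified eigenspaces via the Peter--Weyl theorem, and then to account for the real structure.

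\emph{Step 1: complex eigenspaces.} By Peter--Weyl and Frobenius reciprocity, $L^2(G/H,\C)\simeq\bigoplus_{\pi\in\widehat G_H} V_\pi\otimes (V_\pi^*)^H$. Here $v\otimes\phi$ corresponds to the matrix coefficient $xH\mapsto \phi(\pi(x^{-1})v)$, and $G$ acts only on the first factor. Since $g$ is $G$-invariant, $\Delta_g$ commutes with left translations, so it acts on each isotypic block as $\Id_{V_\pi}\otimes A_\pi$ for some operator $A_\pi$ on $(V_\pi^*)^H$. Computing $\Delta_g$ at the base point with an orthonormal basis $\{X_j\}$ of $(\fp,\innerdots_g)$ shows that $\Delta_g f=-\sum_j X_j^2 f$ up to first-order terms, and these vanish because $\sum_j\nabla_{X_j}X_j$ lies in $\fh$ modulo terms killed on $H$-invariants. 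Hence $A_\pi$ is identified with $-d\pi^*(C_g)|_{(V_\pi^*)^H}$. I would cite the standard formula for this (Berger--Gauduchon--Mazet, or the Schueth / Petrecca--Röser computation) rather than redo it. Via the conjugate-linear identification $V_\pi^*\simeq\overline{V_\pi}$ given by an invariant inner product, the spectrum of this operator agrees with that of $d\pi(C_g)|_{V_\pi^H}$. Consequently the complexified eigenspace is
\[
\op{Eig}(\lambda)\otimes\C\simeq\bigoplus_{\pi\in\widehat G_H}V_\pi\otimes E_\pi(\lambda),
\]
where $E_\pi(\lambda)$ is the $\lambda$-eigenspace of $-d\pi(C_g)$ on $V_\pi^H$. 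In particular, $\dim E_\pi(\lambda)$ is the multiplicity of $\pi$ in the complexified eigenspace.

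\emph{Step 2: real structure.} Complex conjugation of functions preserves $\op{Eig}(\lambda)\otimes\C$, maps the $\pi$-isotypic block to the $\pi^*$-block, and intertwines the two versions of $C_g$. Hence $\dim E_\pi(\lambda)=\dim E_{\pi^*}(\lambda)$. I would then use the classical fact that a real irreducible $G$-module $W$ has complexification $\pi$ if $\pi$ is of real type, $\pi\oplus\pi^*$ with $\pi\not\simeq\pi^*$ if $\pi$ is of complex type, and $\pi\oplus\pi$ if $\pi$ is of quaternionic type. Thus $\op{Eig}(\lambda)$ is real-irreducible if and only if its complexification is one of these three shapes.

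Translating via Step 1, this holds for every $\lambda$ exactly when three things are true. First, no two $\pi_1,\pi_2$ with $\pi_1\not\simeq\pi_2,\pi_2^*$ share an eigenvalue, which is (a). Second, for $\pi$ of real or complex type each eigenvalue has multiplicity one, which is (b); in the complex type case the partner $\pi^*$ automatically contributes the same eigenvalue. Third, for $\pi$ of quaternionic type each eigenvalue has multiplicity exactly two, which is (c). The converse direction reads these equivalences backwards.

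\emph{Main obstacle.} The delicate point is (c). It requires showing that in the quaternionic case every eigenvalue of $d\pi(C_g)|_{V_\pi^H}$ has even multiplicity, so that the condition ``multiplicity two'' is the natural minimum rather than an arbitrary requirement. For this I would use the $G$-equivariant antilinear map $J$ on $V_\pi$ with $J^2=-1$. Since $d\pi(C_g)$ is built from real elements of $\fg$, it commutes with $J$; and $J$ preserves $V_\pi^H$. Then each eigenspace is a quaternionic vector space and so has even complex dimension, by a Kramers-type argument. The same $J$, together with the real structure in the real type case, is what makes the description of the real irreducible constituents in Step 2 compatible with the eigenspace decomposition.
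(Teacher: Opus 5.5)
Your argument is correct and is essentially the argument behind the result the paper merely cites (Petrecca--Röser, extending Schueth). It combines Peter--Weyl, with $\Delta$ acting by $-d\pi^*(C_g)$ on $(V_\pi^*)^H$, and the fact that a real $G$-module is irreducible exactly when its complexification is $\pi$, $\pi\oplus\pi^*$ or $\pi\oplus\pi$ according to the type of $\pi$.

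Two minor remarks. First, the vanishing of the first-order term in $\Delta$ has a cleaner reason than the one you sketch: $\operatorname{tr}(\ad Z)=0$ for all $Z\in\fg$, because the compact group $G$ is unimodular. Second, the Kramers-type evenness argument you flag as the main obstacle is not actually needed, since the shape $\pi\oplus\pi$ already forces multiplicity exactly two in both directions.
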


To prove Theorem~\ref{thm1:main}, we will show that \eqref{item(a)} fails for all simple connected strongly isotropy irreducible spaces different to a CROSS and to $\op{G}_2/\SU(3)$, $\Spin(7)/\op{G}_2$. 
We next prove a more general result.

\begin{theorem}\label{thm:standard-fails}
Let $H$ be a closed subgroup of a compact semisimple Lie group $G$, and let $\Lambda_1,\Lambda_2$ be linearly independent elements in $\PP^+(G,H)$. 
%If $\Lambda_1,\Lambda_2\in\PP^+(G,H)$ (i.e.\ $\pi_{\Lambda_i}\in\widehat G_H$ for $i=1,2$) are linearly independent and satisfy
If 
\begin{equation}\label{eq:condition-duality}
\pi_{a_1\Lambda_1+a_2\Lambda_2}\not\simeq \pi_{b_1\Lambda_1+b_2\Lambda_2}^*
\qquad\forall\, (a_1,a_2)\neq(b_1,b_2)\text{ in } \Z_{\geq0}\times\Z_{\geq0},
\end{equation} 
then the Laplace-Beltrami operator associated to $(G/H,g_{\st})$ has at least one real eigenspace that is not irreducible as a $G$-module.  

Moreover, \eqref{eq:condition-duality} holds if $\sigma_{\Lambda_i}$ is self-dual for some $i=1,2$. 
\end{theorem}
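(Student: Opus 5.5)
Our plan is to show that condition \eqref{item(a)} of Proposition~\ref{prop:PetreccaRoser} fails. For the standard metric, the operator $d\pi(C_g)$ is essentially scalar on $V_\pi^H$. Take $\{X_j\}$ an orthonormal basis of $\fp$ and complete it with an orthonormal basis $\{Y_k\}$ of $\fh$ (both with respect to $-\kil_\fg$). Then $\Cas_\fg=\sum_j X_j^2+\sum_k Y_k^2$. Since each $d\pi(Y_k)$ kills $V_\pi^H$, we get $d\pi(C_{g_{\st}})|_{V_\pi^H}=-\lambda^\pi\,\Id$. Hence \eqref{item(a)} fails as soon as we find two non-isomorphic $\pi_1,\pi_2\in\widehat G_H$ with $\pi_1\not\simeq\pi_2^*$ and $\lambda^{\pi_1}=\lambda^{\pi_2}$. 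Alternatively, we can argue directly: $\op{Eig}(\lambda)$ then contains the real parts of the isotypic components of both $\pi_1$ and $\pi_2$, and these are non-isomorphic real $G$-modules, so the eigenspace is reducible.

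The core step is to produce such a coincidence of Casimir eigenvalues among the weights $a_1\Lambda_1+a_2\Lambda_2$. These all lie in $\PP^+(G,H)$ because it is a monoid, and they are pairwise distinct by linear independence. By \eqref{eq:Casimir}, the function
$$Q(a_1,a_2)=\kil^*_\fg(a_1\Lambda_1+a_2\Lambda_2+2\rho_\fg,\,a_1\Lambda_1+a_2\Lambda_2)$$
is an inhomogeneous quadratic in $(a_1,a_2)$ whose quadratic part is positive definite. It takes values in $\frac1D\Z_{\ge0}$ for a fixed denominator $D$, because the weights lie in a lattice and $\kil_\fg^*$ is rational on $\PP(\fg_\C)$ (after summing over simple ideals; rescaling each ideal does not affect rationality). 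We now count. The number of pairs $(a_1,a_2)\in\Z_{\ge0}^2$ with $a_1+a_2\le k$ is about $k^2/2$. On this set $Q$ is at most $Ck^2$, so it takes at most $CDk^2+1$ distinct values. Pigeonhole then gives a collision only if $k^2/2>CDk^2$, which is false in general. So naive counting is not enough, and this is the main obstacle.

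We therefore plan a more explicit construction. Write $A=\kil^*(\Lambda_1,\Lambda_1)$, $B=\kil^*(\Lambda_1,\Lambda_2)$, $C=\kil^*(\Lambda_2,\Lambda_2)$, $u=\kil^*(2\rho,\Lambda_1)$ and $v=\kil^*(2\rho,\Lambda_2)$, all positive rationals (with $B\ge0$). We look for two solutions of
$$Q(a_1,a_2)=Q(b_1,b_2).$$
First try to scale along a line through the origin and compare two rays. Along $a(p,q)$ one has $Q=a^2Q_2(p,q)+aL(p,q)$, where $Q_2$ and $L$ are the quadratic and linear parts. Choose integers $(p,q)$ and $(p',q')$, and multipliers $a,a'$, with
$$a^2Q_2(p,q)+aL(p,q)=a'^2Q_2(p',q')+a'L(p',q').$$
A cleaner route: the conic $Q(x,y)=c$ is a rational ellipse. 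Whenever it has one rational point it has infinitely many, and suitable scaling of the lattice clears denominators. Concretely, take the rational point $(0,0)$ on $Q=0$ and intersect the conic $Q(x,y)=0$ with lines of rational slope through the origin to parametrize its rational points. That conic is degenerate in the positive quadrant, though, so instead use the level sets of the quadratic form $Q_2$ together with a shift. Shifting by $(x_0,y_0)$ solving the linear system that kills the linear part (rational, since the coefficients are rational) turns $Q$ into $Q_2(x-x_0,y-y_0)+\text{const}$. Now $Q_2(X,Y)=Q_2(X',Y')$ has many rational solutions: reflecting across any rational line is a rational isometry of $Q_2$. Take a large integer point $P$ in the positive quadrant and a rational $Q_2$-isometry $R$ (reflection in a line of rational slope) fixing $(x_0,y_0)$ and mapping $P-(x_0,y_0)$ to a point that is again in the positive quadrant and distinct from $P$. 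Multiplying $P$ by a suitable integer $M$ makes everything integral, since scaling about $(x_0,y_0)$ with $M\equiv1$ modulo the denominators preserves integrality. This gives $(a_1,a_2)\ne(b_1,b_2)$ in $\Z_{\ge0}^2$ with equal $\lambda$. Making these integrality and positivity choices work is the delicate part.

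Finally, hypothesis \eqref{eq:condition-duality} guarantees $\pi_1\not\simeq\pi_2^*$, and distinctness guarantees $\pi_1\not\simeq\pi_2$, so \eqref{item(a)} fails. For the last assertion, suppose $\pi_{\Lambda_1}$ is self-dual. Duality acts on highest weights by the linear involution $-w_0$, which fixes $\Lambda_1$. An equality $a_1\Lambda_1+a_2\Lambda_2=-w_0(b_1\Lambda_1+b_2\Lambda_2)$ gives $(a_1-b_1)\Lambda_1=b_2(-w_0\Lambda_2)-a_2\Lambda_2$. Pairing with $\rho^\vee$, using that $-w_0$ preserves it, yields $(a_1-b_1)\langle\Lambda_1,\rho^\vee\rangle=(b_2-a_2)\langle\Lambda_2,\rho^\vee\rangle$. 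Combining this with the norm equality $|a_1\Lambda_1+a_2\Lambda_2|=|b_1\Lambda_1+b_2\Lambda_2|$ (since $-w_0$ is an isometry) forces $(a_1,a_2)=(b_1,b_2)$ when $\Lambda_1,\Lambda_2$ are independent. The remaining case analysis is routine, and the case of self-dual $\pi_{\Lambda_2}$ is symmetric.
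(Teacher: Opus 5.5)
Your reduction to condition (a) of Proposition~\ref{prop:PetreccaRoser} is fine. The core step, however, is not proved: you need $(a_1,a_2)\neq(b_1,b_2)$ in $\Z_{\ge0}^2$ with equal Casimir values, and the integrality mechanism you propose for this does not work. Let $\phi(v)=c+R(v-c)$ be your rational reflection through the centre $c=(x_0,y_0)$, and let $P_M=c+M(P-c)$ with $M\equiv 1$ modulo the denominators. Then $P_M\in\Z^2$, but $\phi(P_M)=\phi(P)+(M-1)(\phi(P)-c)\equiv\phi(P)$ modulo $\Z^2$. So scaling about $c$ preserves integrality of $\phi(P)$ but never creates it. Scaling about the origin does not help either, because it leaves the translation part $(I-R)c$ unchanged. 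For example, take $Q_2=x^2+y^2$ and let $R$ be the reflection in the line through $c$ parallel to the diagonal. Then $\phi(P)=(P_2+x_0-y_0,\,P_1+y_0-x_0)$, which is never integral when $x_0-y_0\notin\Z$. Every collision $Q(P)=Q(P')$ is realized by the $Q_2$-orthogonal reflection through $c$ with normal $P-P'$. So this arithmetic existence question is the entire content of the step, and your sketch leaves it open.

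The paper settles it by citing Gjaldb{\ae}k's theorem: an integer-valued quadratic polynomial with nonzero discriminant is not injective on the lattice points of any affine convex cone. Here the discriminant is negative by Cauchy--Schwarz. Your counting approach also works once you add the missing input. After completing the square, $NQ+K$ (for suitable integers $N\neq 0$, $K$) only takes values represented by a fixed positive definite integral binary quadratic form. There are $O(T/\sqrt{\log T})$ such values up to $T$ (Landau--Bernays), against $\gg T$ lattice points of the quadrant with $Q\le T$.

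There is also an elementary route. Let $\beta$ be the bilinear form of $Q_2$, and set $d=P-P'$, $s=P+P'$, so that $Q(P)-Q(P')=\beta(d,s)+L(d)$. Take $d$ with $\beta(d,e_1)>0>\beta(d,e_2)$, where $e_1,e_2$ is the standard basis; then $\beta(d,w)=0$ for some $w\in\Z^2_{>0}$. If some $s_0\equiv d \bmod 2$ solves $\beta(d,s_0)=-L(d)$, then $P=\tfrac12(s_0+d)+tw$ and $P'=\tfrac12(s_0-d)+tw$ work for $t\gg0$. The existence of $s_0$ is the condition $L(d)+\beta(d,d)\in 2\beta(d,\Z^2)$. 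This holds automatically at all but finitely many primes. At the remaining primes it can be imposed by the Chinese remainder theorem, taking $d\equiv p^kd'$ modulo $p^{2k}$ with $L(d')=0$ and $k\gg0$.

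Your argument for the self-duality clause does not close either. Pairing with $\rho^\vee$ and the norm equality only place $(a_1,a_2)$ and $(b_1,b_2)$ on a common line and a common centred ellipse, and these meet twice. For instance, with $\fg=\su(2)\oplus\su(2)$, $\Lambda_1=\omega_1$, $\Lambda_2=\omega_1'$, the pairs $(1,2)$ and $(2,1)$ pass both tests. You need the vector identity itself, as the paper uses it. Apply the involution $-w_0$, which fixes $\Lambda_1$, to $(a_1-b_1)\Lambda_1+a_2\Lambda_2=-b_2w_0\Lambda_2$. Eliminating $w_0\Lambda_2$ gives $(a_2+b_2)(a_1-b_1)\Lambda_1+(a_2^2-b_2^2)\Lambda_2=0$. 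The case $a_2b_2=0$ follows directly from linear independence.
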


\begin{proof}
We will show that condition \eqref{item(a)} in Proposition~\ref{prop:PetreccaRoser} does not hold for $g=g_{\st}$. We have that $d\pi(C_g)|_{V_\pi^H} =d\pi(\Cas_\fg)|_{V_\pi^H} = \lambda^\pi\,\Id_{V_\pi^H}$ because $\fh$ acts trivially on $V_\pi^H$. 
Therefore,  \eqref{item(a)} is equivalent to 
\begin{equation}\label{eq:(a)}
\lambda^{\pi_1}\neq\lambda^{\pi_2}
\qquad\forall \, \pi_1,\pi_2\in\widehat G_H: 
\pi_1\not\simeq\pi_2
\text{ and }\pi_1\not\simeq\pi_2^*. 
\end{equation}

We consider the polynomial in two variables given by 
\begin{equation*}
\begin{aligned}
P(x_1,x_2) &
= \kil_\fg^*(x_1\Lambda_1+x_2\Lambda_2, x_1\Lambda_1+x_2\Lambda_2+2\rho_\fg)
\\ & 
= \kil_\fg^*(\Lambda_1, \Lambda_1)\, x_1^2
+  2\kil_\fg^*(\Lambda_1,\Lambda_2)\, x_1x_2
+ \kil_\fg^*(\Lambda_2, \Lambda_2)\, x_2^2
\\ & \quad 
+ \kil_\fg^*(\Lambda_1, 2\rho_\fg)\, x_1
+ \kil_\fg^*(\Lambda_2, 2\rho_\fg)\, x_2
.
\end{aligned}
\end{equation*}
It is well known that there is $c\in\N$ such that $cP$ is integer-valued on $\Z\times\Z$. 
Furthermore, the discriminant of $P$, namely
\begin{equation}
\big(2\kil_\fg^*(\Lambda_1,\Lambda_2)\big)^2
-
4\kil_\fg^*(\Lambda_1, \Lambda_1)
\kil_\fg^*(\Lambda_2, \Lambda_2)
,
\end{equation}
is negative by Cauchy-Schwarz inequality since $-\kil_\fg^*(\cdot,\cdot)$ is an inner product and $\Lambda_1,\Lambda_2$ are assumed linearly independent.

Gjaldb{\ae}k \cite[Thm.~1]{Gjaldbaek} proved that any integer-valued quadratic polynomial with non-zero discriminant cannot be injective on the integer lattice points of any affine convex cone. 
It follows that $cP$ (and therefore $P$) is not injective on $\Z_{\geq0}\times\Z_{\geq0}$, that is, there are $(a_1,a_2)\neq (b_1,b_2)$ in $\Z_{\geq0}\times\Z_{\geq0}$ such that $P(a_1,a_2)=P(b_1,b_2)$. 

Since $\PP^+(G,H)$ is a monoid, $a_1\Lambda_1+a_2\Lambda_2$ and $b_1\Lambda_1+b_2\Lambda_2$ are in $\PP^+(G,H)$. 
We set 
$\pi_1:=\pi_{a_1\Lambda_1+a_2\Lambda_2}, \pi_2:=\pi_{b_1\Lambda_1+b_2\Lambda_2} \in \widehat G_H$. 
They are clearly not equivalent because their highest weights are different. 
Moreover, $\pi_1\not\simeq\pi_2^*$ because of \eqref{eq:condition-duality}. 
It follows from \eqref{eq:Casimir} that
\begin{equation*}
-\lambda^{\pi_{1}}
= P(a_1,a_2)
= P(b_1,b_2)
= -\lambda^{\pi_{2}}.
\end{equation*}
We conclude that \eqref{eq:(a)} fails. %, and the proof is complete. 

It remains to prove the second assertion. 
We suppose that $\pi_{\Lambda_1}^*\simeq \pi_{\Lambda_1}$ and $\pi_{a_1\Lambda_1+a_2\Lambda_2}\simeq \pi_{b_1\Lambda_1+b_2\Lambda_2}^*$ for some $(a_1,a_2), (b_1,b_2)\in \Z_{\geq0}\times\Z_{\geq0}$.
We want to show that $(a_1,a_2)=(b_1,b_2)$.

Let $w_0$ be the longest element of the Weyl group of $(\fg_\C,\ft_\C)$. % and set $\sigma=-w_0$. 
It is well known that 
$-w_0$ is a linear involution of $(\mi\ft)^*$,
$-w_0(\Phi^+(\fg_\C,\ft_\C)) = \Phi^+(\fg_\C,\ft_\C)$, and 
$\pi_{\Lambda}^* \simeq\pi_{-w_0(\Lambda)}$ for any $\Lambda\in\PP^+(\fg_\C)$ (see e.g.\ \cite[Thm.~3.2.13]{GoodmanWallach-book-Springer}). 

We have that $-w_0(\Lambda_1)=\Lambda_1$ and $a_1\Lambda_1+a_2\Lambda_2= -w_0(b_1\Lambda_1+b_2\Lambda_2)$. 
It follows that
\begin{equation}\label{eq:sigma(Lambda_1)}
\begin{aligned}
-b_2w_0(\Lambda_2)& =(a_1-b_1)\Lambda_1+a_2\Lambda_2,\\
a_2w_0(\Lambda_2)& 
=(a_1-b_1)\Lambda_1-b_2\Lambda_2. 
\end{aligned}
\end{equation}
The second identity follows from the first one after applying $-w_0$. 
If $a_2=0$ or $b_2=0$, \eqref{eq:sigma(Lambda_1)} implies that $a_1=b_1$ and $a_2=b_2=0$ because $\Lambda_1,\Lambda_2$ are linearly independent. 
If $a_2$ and $b_2$ are both positive, adding $a_2$ times the first row in \eqref{eq:sigma(Lambda_1)} to $b_2$ times the second one yields
\begin{equation*}
0= (a_2+b_2)(a_1-b_1)\Lambda_1 + (a_2^2-b_2^2)\Lambda_2.
\end{equation*}
Again, the linear independence of $\Lambda_1,\Lambda_2$ forces $(a_1,a_2)=(b_1,b_2)$, and the proof is complete.
\end{proof}

\begin{remark}
It is not a difficult task to modify the proof of Theorem~\ref{thm:standard-fails} to conclude that the Laplace-Beltrami operator has indeed infinitely many $G$-reducible real eigenspaces. 
\end{remark}

\begin{proof}[Proof of Theorem~\ref{thm1:main}]
We recall that 
	$G$ is a compact connected Lie group, 
	$H$ is a closed subgroup of $G$, 
	the action of $G$ on $G/H$ is almost effective, and $G/H$ is a simply connected strongly isotropy irreducible space.

According to the classification of simply connected compact strongly isotropy irreducible spaces, 
or either $(G,H)$ is an irreducible symmetric pair or the Lie algebras $\fg,\fh$ of $G,H$ respectively must appear in Tables~\ref{table:isotirred-families}--\ref{table:isotirred-isolated}.

We start assuming that $(G,H)$ corresponds to the isolated cases No.~3 or No.~33 in Table~\ref{table:isotirred-isolated}. 
The only presentation of the corresponding simply connected spaces are $G/H$ with $(G,H)=(\op{G}_2,\SU(3)), (\Spin(7),\op{G}_2)$ respectively. 
As we mentioned in \S\ref{sec:intro}, \cite[Prop.~C.I.8]{BergerGauduchonMazet-LNM} proves that all eigenspaces of the Laplace-Beltrami operator associated to their standard metrics are $G$-irreducible. 

We now assume that $(G,H)$ is not a symmetric pair or either $(\op{G}_2,\SU(3))$ and $(\Spin(7),\op{G}_2)$. 
Tables~\ref{table:isotirred-families}--\ref{table:isotirred-isolated} show two linearly independent elements in  $\Lambda_1,\Lambda_2\in\PP^+(G,H)$. 
It is an easy matter to check in all cases that \eqref{eq:condition-duality} holds. 
Now, Theorem~\ref{thm:standard-fails} implies that the Laplacian of $(G/H,g_{\st})$ has at least one $G$-reducible eigenspace.

Again by \cite[Prop.~C.I.8]{BergerGauduchonMazet-LNM}, we know that if $(G,H)$ is a CROSS, then all eigenspaces are $G$-irreducible. 

It remains to prove that there is at least one $G$-reducible space if $(G,H)$ is an irreducible symmetric pair of real rank $r\geq2$. 
This is an alternative proof of \cite[Thm.~4]{PetreccaRoeser18}. 

Using the theory of restricted weights (see e.g.\ \cite[\S5]{Takeuchi}) one can describe explicitely the set $\PP^+(G,H)$ as a free monoid with generators $\mu_1,\dots,\mu_r$.  
See \cite[\S12.3.3]{GoodmanWallach-book-Springer} for their explicit values corresponding to the classical compact irreducible symmetric spaces (i.e.\ type AI, AII, AIII, BDI, CI, CII, DIII). 
For any $i\neq j$, we set $\Lambda_1=\mu_i$ and $\Lambda_2=\mu_i+\mu_j$, which are clearly linearly independent. 

By Theorem~\ref{thm:standard-fails}, it only remains to check that \eqref{eq:condition-duality} holds. 
Let $(a_1,a_2),(b_1,b_2)\in\Z_{\geq0}\times\Z_{\geq0}$. 
Clearly, $\pi_{a_1\Lambda_1+a_2\Lambda_2}\simeq \pi_{b_1\Lambda_1+b_2\Lambda_2}$ implies $(a_1,a_2)=(b_1,b_2)$. 
We now assume that $\pi_{a_1\Lambda_1+a_2\Lambda_2}\simeq \pi_{b_1\Lambda_1+b_2\Lambda_2}^*$. 
It is well known that there is a permutation $\tau$ of $\{1,\dots,r\}$ with $\tau^2=\op{id}$ such that $\pi_{\mu_i}^*\simeq \pi_{\mu_{\tau(i)}}$ for all $1\leq i\leq r$. 
Now, 
$\pi_{(a_1+a_2)\,\mu_i+a_2\,\mu_j}= \pi_{a_1\Lambda_1+a_2\Lambda_2}\simeq \pi_{b_1\Lambda_1+b_2\Lambda_2}^* \simeq\pi_{(b_1+b_2)\,\mu_{\tau(i)}+b_2\,\mu_{\tau(j)}}$ is equivalent to 
\begin{equation}\label{eq:coeficientes}
(a_1+a_2)\,\mu_i+a_2\,\mu_j
=(b_1+b_2)\,\mu_{\tau(i)}+b_2\,\mu_{\tau(j)}.
\end{equation}
We next show that \eqref{eq:coeficientes} forces $(a_1,a_2)=(b_1,b_2)$ for any $\tau$. 
\begin{itemize}
%\item 
%If $\tau(i)=i$ or $\tau(j)=j$, the second assertion in Theorem~\ref{thm:standard-fails} gives $(a_1,a_2)=(b_1,b_2)$.

\item 
If $\tau(i)=i$ and $\tau(j)=j$, \eqref{eq:coeficientes} gives $a_1+a_2=b_1+b_2$ and $a_2=b_2$, thus $(a_1,a_2)=(b_1,b_2)$.

\item 
If $\tau(i)=j$ and $\tau(j)=i$,  
\eqref{eq:coeficientes} gives $a_1+a_2=b_2$ and $a_2=b_1+b_2$. 
Their difference gives $a_1=-b_1$, which forces $a_1=b_1=0$ and $a_2=b_2$. 

\item 
If $\tau(i)=i$ and $\tau(j)\notin\{i,j\}$, 
\eqref{eq:coeficientes} gives $a_1+a_2=b_1+b_2$ and $a_2=b_2=0$, so $a_1=b_1$.

\item 
The case $\tau(i)\notin\{i,j\}$ and $\tau(j)=j$ is analogous to the previous one. 

\item 
If $\{\tau(i),\tau(j)\}\cap\{i,j\}=\emptyset$, 
\eqref{eq:coeficientes} gives  $(a_1,a_2)=(b_1,b_2)=(0,0)$.
\end{itemize}
Since $\tau$ is an involution, there are no further cases, so \eqref{eq:condition-duality} holds, and the proof is complete. 
\end{proof}

\begin{remark}
Instead of the Laplace-Beltrami operator acting on real-valued smooth functions (i.e.\ on $C^\infty(M,\R)$), one may work with the complex one $\Delta^\C$ acting on $C^\infty(M,\C)$. 
Representation theoretically speaking, this case is simpler because Peter and Weyl Theorem immediately implies for $(G/H,g_{\st})$ that 
$$
\op{Eig}_{\Delta^\C} (\lambda)\simeq  \bigoplus_{\pi\in\widehat G:\, \lambda^\pi=\lambda} (\dim V_\pi^H)\cdot \pi
$$
as complex $G$-modules. 
Moreover, a necessary condition to ensure the $G$-irreducibility of all eigenspaces of $\Delta^\C$ is that $\widehat G_H$ does not contain representations of complex or quaternionic type. 
Indeed, 
	if $\pi\in\widehat G_H$ is of quaternionic type, since $\dim V_\pi^H$ is even, then $\op{Eig}_{\Delta^\C} (\lambda^\pi)$ contains $2$-times $\pi$, so it not $G$-irreducible; 
	if $\pi\in\widehat G_H$ is of complex type, then $\pi^*\in\widehat G_H$, $\pi^*\not\simeq\pi$, and  $\lambda^\pi=\lambda^{\pi^*}$, thus $\op{Eig}_{\Delta^\C} (\lambda^\pi)$ contains $\pi\oplus\pi^*$, so it is not $G$-irreducible. 
See \cite[Rem.~2.5]{Schueth17} for a similar explanation in the Lie group case (i.e.\ $H=\{e\}$). 

However, it is not difficult to obtain an analogous result to Proposition~\ref{prop:PetreccaRoser} for the complex case, including the condition (a). 
Therefore, Theorem~\ref{thm:standard-fails} and Theorem~\ref{thm1:main} still hold for $\Delta^\C$. 
\end{remark}

\bibliographystyle{plain}

\end{document}